\newcommand{\indicator}[1]{\mathbbm{1}_{\{#1\}}}
\newcommand{\norm}[1]{\left\lvert #1 \right\rvert}
\newcommand{\mv}[1]{{\bf #1}}
\theoremstyle{plain}
\newtheorem{theorem}{Theorem}[section]
\newtheorem{prop}[theorem]{Proposition}
\newtheorem{lemma}[theorem]{Lemma}
\newtheorem*{assumption*}{Assumption}
\newtheorem*{conjecture*}{Conjecture}
\numberwithin{equation}{section}
\theoremstyle{remark}
\def\E{\mathbb{E}}
\def\N{\mathbb{N}}
\def\P{\mathbb{P}}
\def\R{\mathbb{R}}
\def\Bcal{\Theta}
\def\Dcal{\mathcal{D}}
\def\Gcal{\mathcal{G}}
\def\Mcal{\mathcal{M}}
\def\d{\mathrm{d}}
\title{State space collapse for critical multistage epidemics}
\author{Florian Simatos}
\email{f.simatos@tue.nl}
\address{Department of Mathematics \& Computer Science, Eindhoven University of Technology, P.O.\ Box 513, 5600 MB Eindhoven, The Netherlands}
\date{\today}
\begin{document}

\begin{abstract}
	We study a multistage epidemic model which generalizes the SIR model and where infected individuals go through $K \geq 1$ stages of the epidemic before being removed. An infected individual in stage $k \in \{1, \ldots, K\}$ may infect a susceptible individual, who directly goes to stage $k$ of the epidemic; or it may go to the next stage $k{+}1$ of the epidemic. For this model, we identify the critical regime in which we establish diffusion approximations. Surprisingly, the limiting diffusion exhibits an unusual form of state space collapse which we analyze in detail.
\end{abstract}

\maketitle

\setcounter{tocdepth}{1}

\bigskip
\hrule
\vspace{-2mm}
\tableofcontents
\vspace{-8mm}
\hrule

\section{Introduction}

\subsection*{A multistage model}

In this paper we study an epidemic model proposed by Antal and Krapivsky~\cite{Antal12:0} which generalizes the classical SIR model. Similarly as for the SIR model, there is a closed population where each individual is either \emph{susceptible}, \emph{infected} or \emph{removed}. In addition, we assume that the disease which is spread progresses through $K \geq 1$ stages. For short, we will say that an (infected) individual is in stage $k \in \{1, \ldots, K\}$ if it possesses the disease in stage $k$. It will also be convenient to say that an individual is in stage $0$ if it is susceptible and in stage $K{+}1$ if it is removed.

Two types of transitions may occur within our model: either $(1)$ an infected individual in stage $k \in \{1, \ldots, K\}$ tries to infect a susceptible individual; or $(2)$ an infected individual in stage $k \in \{1, \ldots, K\}$ progresses to the next stage $k{+}1$. We consider the mean-field regime where in case $(1)$ above, the infected individual samples an individual uniformly at random from the whole population: if the sampled individual is susceptible, then it becomes infected and starts the infection at the same stage than the individual who infected him or her.

In view of its epidemiological interpretation and because it generalizes the SIR model (which corresponds to the single-stage case $K{=}1$), it is natural to interpret this model as an epidemic model. From that viewpoint, our model differs from multistage epidemic models previously proposed. For HIV/AIDS for instance, Hyman et al.~\cite{Hyman99:0} proposed a multistage (or staged-progression) model where a newly infected individual starts the epidemic in stage one. We believe that the techniques used in the present paper can be adapted to study this case as well.

Besides its natural epidemiological interpretation, this model may also bear insight into other areas of applied probability. We may for instance think of cell population dynamic, e.g., when a cancer tumor progresses, cells accumulate mutations that are passed on to newly infected cells. In this case the stage of an individual corresponds to the number of mutations. Another potential application of this model is in communication networks: in this case we may think of users accumulating information and, upon meeting with a user with no information, sharing all the information the first user has. Such kind of epidemic spreading of information is actually at the heart of modern peer-to-peer systems.

\subsection*{Scaling limits}

Epidemic models can broadly be categorized into discrete and continuous ones, see for instance Kendall~\cite{Kendall56:0}. Discrete models are individual-based models that focus on the inherent stochasticity of the dynamic, and are often described by finite state space Markov processes. Continuous models correspond to large population approximations: they may be either deterministic (corresponding to some notion of averaging in the large population regime) or stochastic. In the first case, the continuous model is typically described by ordinary differential equations (ODE) and in the second case, by stochastic differential equations (SDE).

In the present paper, we bridge these two viewpoints by starting from a discrete model which, after a suitable renormalization procedure, leads to a continuous model. This is a classical approach which allows to understand the extent to which a continuous model, usually more tractable, suitably approximates a discrete one. In addition to giving insight into how the epidemic develops over time, such schemes are also useful to understand the final outbreak size of the epidemic. In the SIR case for instance, this makes it possible to express the final outbreak size, properly renormalized, as the hitting time of $0$ by a Brownian motion with a parabolic drift; this allows in turn for explicit computation, see for instance Martin-L\"of~\cite{Martin-Lof98:0}. Note that our forthcoming Theorem~\ref{thm:outbreak} generalizes this result to the multistage case $K \geq 1$.

For the SIR model it is well-known that different scaling limits may be obtained depending on the discrete model's parameters, see for instance von Bahr and Martin-L\"of~\cite{Bahr80:0}. In particular, the limit may be deterministic or retain some stochasticity of the original discrete model, and the intermediate regime at which transition occurs is usually referred to as \emph{critical regime}: this is the regime in which we are interested in in the present paper. In this regime, scaling limits of the SIR model have for instance been obtained by Martin-L\"of et al.~\cite{Martin-Lof86:0, Martin-Lof98:0, Bahr80:0} and by Aldous~\cite{Aldous97:0} in the closely related context of the Erd\"os-R\'enyi random graph.

A popular method to establish such scaling limits relies on semi-group techniques, see for instance Ethier and Kurtz~\cite{Ethier86:0}. This approach has for instance recently been followed by Dolgoarshinnykh and Lalley~\cite{Dolgoarshinnykh06:0} to study the related SIS model, who also sketch a proof for the SIR model. However, because of the different boundary conditions between the SIS and the SIR models, the authors only mention that different analytical results from the theory of parabolic partial differential equations are needed in the SIR case. In the present paper we use a different and more probabilistic approach, closer to Aldous' approach~\cite{Aldous97:0}, that relies on semimartingale arguments. These arguments rely on elementary calculations made on the infinitesimal generator of the Markov process, and require almost no analytical results. This is the reason why we believe that this method is robust and can be adapted to study other models such as the HIV/AIDS model of Hyman et al.~\cite{Hyman99:0}.

\subsection*{Presentation and discussion of the main results}

Let us present our results in the particular case where all individual transition rates are equal to one: in general these will be allowed to be \emph{close} to one. For $n \geq 1$ we consider the $K{+}2$-dimensional Markov process $\mv{a}_n(t) = (a_{n,k}(t), k = 0, \ldots, K+1)$ that represents a multistage epidemic process in a population of size $n$. It is defined by an initial state $\mv{a}_n(0) = (a_{n,k}(0), k = 0, \ldots, K+1)$ satisfying the relation $a_{n,0}(0) + \cdots + a_{n,K+1}(0) = n$ and, for $a = (a_0, a_1, \ldots, a_{K+1}) \in \N^{K+2}$, by the following transition rates:
\begin{equation} \label{eq:transition-rates-a_n-critical}
	a \longrightarrow \begin{cases}
		a - e_k + e_{k+1} & \text{ at rate } a_k, \ \ 1 \leq k \leq K,\\
		a - e_0 + e_k & \text{ at rate } a_k a_0/n, \ \ 1 \leq k \leq K,
	\end{cases}
\end{equation}
where $e_0 = (1,0,\ldots, 0)$, $e_1 = (0,1,0,\ldots, 0)$, etc. For $t \geq 0$, $a_{n,k}(t)$ is the number of individuals in stage $k$ at time $t$, i.e., $a_{n,0}(t)$ is the number of susceptible individuals; $a_{n,k}(t)$ for $k = 1, \ldots, K$ is the number of infected individuals in stage $k$; and $a_{n,K+1}(t)$ is the number of removed individuals. The transition rates~\eqref{eq:transition-rates-a_n-critical} preserve the total population size, so that the relation $a_{n,0}(t) + \cdots + a_{n,K+1}(t) = n$ is satisfied for any $t \geq 0$.

These rates formalize the description of the dynamic given above: each infected individual progresses to the next stage of the epidemic at rate one; each infected individual makes an infection attempt at rate one, which is successful with probability proportional to the number of susceptible individuals (which gives the factor $a_0/n$).
\\

Consider the initial condition where $a_{n,1}(0) = n^\beta$ for some $\beta \in (0,1)$ and $a_{n,k}(0) = 0$ for $k = 2, \ldots, K+1$. The asymptotic behavior of the $K{+}1$-dimensional process $(a_{n,k}, k = 1, \ldots, K+1)$ as $n \to +\infty$ is then governed by the precise value of $\beta$, and there are three different regimes:
\begin{description}
	\item[Small initial condition $\beta < 1/(K+2)$] on the time scale $n^\beta$, $a_{n,k}$ lives on the space scale $n^{k \beta}$ and the scaling limit is governed by the following SDE:
	\begin{equation} \label{eq:SL-small-critical}
		\begin{cases}
			\d A_1(t) = (2 A_1(t))^{1/2} \d B(t), & \\
			\d A_k(t) = A_{k-1}(t) \d t &  \ \text{ for } \ k = 2, \ldots, K+1;
		\end{cases}
	\end{equation}
	\item[Intermediate initial condition $\beta = 1/(K+2)$] on the time scale $n^{1/(K+2)}$, $a_{n,k}$ lives on the space scale $n^{k/(K+2)}$, the outbreak size is of the order of $n^{(K+1)/(K+2)}$ and the scaling limit is governed by the following SDE:
	\begin{equation} \label{eq:SL-intermediate-critical}
		\begin{cases}
			\d A_1(t) = - A_{K+1}(t) A_1(t) \d t + (2 A_1(t))^{1/2} \d B(t), & \\
			\d A_k(t) = (A_{k-1}(t) - A_{K+1}(t) A_k(t)) \d t &  \ \text{ for } \ k = 2, \ldots, K,\\
			\d A_{K+1}(t) = A_K(t) \d t; & 
		\end{cases}
	\end{equation}
	\item[Large initial condition $\beta > 1/(K+2)$] on the time scale $n^\gamma$ with $\gamma = (1{-}\beta)/(K{+1})$, $a_{n,k}$ lives on the space scale $n^{\beta + (k-1) \gamma}$ and the scaling limit is governed by the following ODE:
	\begin{equation} \label{eq:SL-large-critical}
		\begin{cases}
			\d A_1(t) = - A_{K+1}(t) A_1(t) \d t, & \\
			\d A_k(t) = (A_{k-1}(t) - A_{K+1}(t) A_k(t)) \d t &  \ \text{ for } \ k = 2, \ldots, K,\\
			\d A_{K+1}(t) = A_K(t) \d t. & 
		\end{cases}
	\end{equation}
\end{description}

In the above, $B$ is a standard Brownian motion. Moreover, since the total population size is fixed, the asymptotic behavior of $a_{n,0}$ is recovered from the behavior of the $a_{n,k}$'s: since $n - a_{n,0} = a_{n,1} + \cdots + a_{n,K+1}$ and since in all three regimes $a_{n,K+1} \gg a_{n,k}$ for $k = 1, \ldots, K$, $n - a_{n,0}$ obeys the same scaling and has the same limit than $a_{n,K+1}$.

It is also interesting to note that the intermediate regime interpolates between the small and large ones in two ways: $(1)$ the scaling at play there can be obtained by letting $\beta \uparrow 1/(K{+}2)$ in the small regime or $\beta \downarrow 1/(K{+}2)$ in the large regime, and $(2)$ the evolution of $A_1$ in~\eqref{eq:SL-intermediate-critical} is a mixture of~\eqref{eq:SL-small-critical} and~\eqref{eq:SL-large-critical}. However, $\beta$ does not appear in the asymptotic dynamics~\eqref{eq:SL-small-critical}--\eqref{eq:SL-large-critical} and so~\eqref{eq:SL-intermediate-critical} appears somehow discontinuously.
\\

When specializing the results of the intermediate regime to $K = 1$ we recover the classical result on the SIR model~\cite{Bahr80:0}: starting with of the order of $n^{1/3}$ infected individuals, we end up with an outbreak size of the order of $n^{2/3}$. When $K = 2$, these results show that starting with $n^{1/4}$ individuals in the first stage, the number of individuals in the second stage during the outbreak is of the order $n^{1/2}$, while the total outbreak size is of the order of $n^{3/4}$.

Note that for the above comparison we have referred to the work by von Bahr and Martin-L\"of~\cite{Bahr80:0} which actually considers a discrete-time version of the SIR model, the so-called Reed-Frost model. In this model, individuals stay infected for a deterministic amount of time and, at the end of this period, try to infect every other individual with a fixed probability. In the case $K = 1$ the two models are indeed equivalent, at least with respect to the final outbreak size: heuristically, it does not matter when and by whom a given individual gets infected. However, in the truly multistage case $K \geq 2$ this is no longer the case: the stage of an individual that spreads the disease matters. In that respect, individuals from different stages ``compete'' for the pool of susceptible individuals and it is natural to consider a continuous-time model. We suspect that, compared to the case $K=1$, this additional difficulty is causing the discrepancy discussed in Section~\ref{sec:conjecture}.
\\

In order to get insight into the scalings at play, let us consider the continuous-time Markov branching process $\mv{a}^B = (a^B_k, k = 1, \ldots, K+1)$ with $K+1$ types, whose transition rates for $a \in \N^{K+1}$ are given by
\begin{equation} \label{eq:transition-rates-BP-critical}
	a \longrightarrow \begin{cases}
		a - e_k + e_{k+1} & \text{ at rate } a_k, \ \ 1 \leq k \leq K,\\
		a + e_k & \text{ at rate } a_k, \ \ 1 \leq k \leq K.
	\end{cases}
\end{equation}

These rates are obtained from~\eqref{eq:transition-rates-a_n-critical} by setting $a_0 = n$: this corresponds to an infinite population limit, where an infection attempt is always successful. Then $a^B_1$ is a continuous-time Markov branching process (also known as Bellman--Harris branching process), and it can be seen that if $a^B_1(0) = n^\beta$ and time is sped up by $n^\beta$, then it converges to an interesting object (Feller diffusion). Thus if we are to have an interesting limit for $a^B_1$, the correct time scale (for all processes) needs to be $n^\beta$.

But then, the total number individuals it ever begets is of the order of $\int_0^\infty a^B_1(t) \d t$ which is of the order of $n^{2 \beta}$. These individuals form the initial condition for $a^B_2$, which can thus be seen as a Bellman--Harris branching process, started at $n^{2\beta}$ but for which time is sped up by $n^\beta$. However, we know from the above discussion that in order for $a^B_2$ to evolve on the space scale $n^{2 \beta}$, it needs to be sped up in time by $n^{2 \beta}$: thus on the time scale $n^\beta$, $a^B_2$ remains essentially constant. However, on this time scale it begets of the order of $\int_0^{n^\beta} a^B_2(t) \d t \propto n^{3 \beta}$ individuals, which form the basis for $a^B_3$. Iterating this argument, we see the geometric progression observed in all three regimes appearing, where $a_{n,k+1}$ is of the order of the time scale times $a_{n,k}$. But actually, more can be learned from this simple approximation.

First of all, we see that the approximation of $\mv{a}_n$ by $\mv{a}^B$ is asymptotically exact in the small regime, in the sense that~\eqref{eq:SL-small-critical} is also the scaling limit of $\mv{a}^B$ subject to the same scaling. This approximation begins to break down in the intermediate regime, which is therefore the regime where \emph{finite-size population effects} begin to kick in. This phenomenon is also sometimes called the \emph{depletion of points effect}: in the intermediate regime the epidemic begins to feel the decreasing number of susceptible individuals.

Moreover, we have just argued that we are considering a time scale which is suited for $a_{n,1}$ but not for $a_{n,2}$. More precisely, on the time scale where $a_{n,1}$ evolves, $a_{n,2}$ has not enough time to evolve on its own and all its randomness comes from $a_{n,1}$. This problem of time scales leads to a classical phenomenon in queueing theory, called \emph{state space collapse}, but which is more surprising to find in the context of epidemic.

\subsection*{An unusual form of state space collapse}

Let $\bar K = K-1$ and $T_n$ be the first time at which $a_{n,1}$ hits zero, i.e., $T_n = \inf\{ t \geq 0: a_{n,1}(t) = 0 \}$. It is then important to observe that the $\bar K {+} 2$-dimensional process $\bar{\mv{a}}_n = (\bar{a}_{n,k}, k = 0, \ldots, \bar K + 1)$ obtained by keeping track of the individuals in stages $0, 2, \ldots, K+1$ after time $T_n$, i.e., with $\bar{a}_{n,0}(t) = a_{n,0}(T_n + t)$ and $\bar{a}_{n,k}(t) = a_{n,k+1}(T_n + t)$ for $k = 1, \ldots, \bar K+1$, is a multistage epidemic process with $\bar K$ stages. However, we know that in the intermediate regime, $a_{n,2}$, and thus $\bar a_{n,1}(0)$, is of the order of $n^{2/(K+2)} = n^{2/(\bar K + 3)}$. As such, this corresponds to a multistage epidemic process with a large initial condition (with $\beta = 2/(\bar K {+} 3) > 1/(\bar K {+} 2)$), and we know from the above discussion that its scaling limit is deterministic. In summary, once the first stage has got extinct, the subsequent evolution of the process is deterministic, which supports the previous claim that all the randomness lies in the first stage. This property can actually be directly read off the SDE~\eqref{eq:SL-intermediate-critical}.

Indeed, a striking feature of this SDE is that the diffusion coefficients governing $A_k$ for $k \neq 1$ are equal to zero: in particular, only the coordinate $A_1$ is truly random, the other ones being obtained deterministically from it via an ODE. More precisely, let $F: \R \times \R^K \to \R^K$ be the function such that, if $\bar{\mv{A}} = (A_2, \ldots, A_{K+1})$, then~\eqref{eq:SL-intermediate-critical} rewrites $\d \bar{\mv{A}}(t) = F(A_1(t), \bar{\mv{A}}(t)) \d t$: then $\bar{\mv{A}}$ satisfies an ODE of the kind
\begin{equation} \label{eq:ODE-critical}
	\frac{\d x(t)}{\d t} = F\left( y(t), x(t) \right), \ t \geq 0.
\end{equation}

It will be seen that if $y: [0,\infty) \to [0,\infty)$ is continuous, then this ODE has a unique solution. This naturally defines a one-dimensional manifold $\Mcal(y) \subset \R^K$, namely, if $x_y$ is the unique solution to~\eqref{eq:ODE-critical}, the space $\Mcal(y) = \left\{ x_y(t): t \geq 0 \right\}$.

If $\mv{A} = (A_1, \ldots, A_{K+1})$ is the solution to~\eqref{eq:SL-intermediate-critical}, then by definition $\bar{\mv{A}}$ is the unique solution to~\eqref{eq:ODE-critical} with $y = A_1$, which justifies seeing $\bar{\mv A}$ as deterministically obtained from $A_1$. Thus we can see $A_{K+1}$ as a function of $A_1$ and the initial condition $\bar{a} = \bar{\mv{A}}(0)$, and write $A_{K+1}(t) = \varphi(A_1, \bar a; t)$ for some deterministic map $\varphi$. With this notation, we see that $A_1$ on its own satisfies the following SDE
\begin{equation} \label{eq:SDE-A_1-critical}
	\d A_1(t) = \left(\gamma_1 - \varphi(A_1, \bar a ; t) \right) A_1(t) \d t + (2A_1(t))^{1/2} \d B(t).
\end{equation}

Such an SDE is sometimes referred to as an It\^o process and is, in the terminology of Rogers and Williams~\cite{Rogers87:0}, not of diffusion type since $\varphi(A_1, \bar a ; t)$ depends on $A_1$ through the entire path $(A_1(s), 0 \leq s \leq t)$ and not only through the value of $A_1$ at time $t$.
\\

In summary, we start with a sequence of $K{+}2$-dimensional stochastic processes (or $K{+}1$-dimensional, given that the total population size is fixed), but the diffusion $\mv{A} = (A_1, \ldots, A_{K+1})$ appearing in the limit is truly one-dimensional: one coordinate, $A_1$, is random and given by~\eqref{eq:SDE-A_1-critical}, while the other coordinates $A_2, \ldots, A_{K+1}$ are obtained deterministically from it via~\eqref{eq:ODE-critical}.

Such a phenomenon of reduction of the dimension of the state space in the critical (or near-critical) regime is well-known in queueing theory, where it is usually referred to as \emph{state space collapse}. It has first been systematically investigated in~\cite{Bramson98:0, Williams98:0} in the context of multiclass queueing networks, and has since then been observed in various settings~\cite{Borst13:0, Reiman84:0, Stolyar04:0, Verloop11:0}. In all these examples and similarly as what we observe in our model, the randomness of the limiting diffusion is contained in one coordinate (the workload process for queueing systems), while the other coordinates are obtained deterministically from it. In queueing theory typically, the workload process converges to a reflected Brownian motion $W$ and the queue length processes $\mv{Q}$ are obtained via a deterministic map, e.g., $\mv{Q} = G \circ W$.

However, there are two main differences between these results from queueing theory and our present form of state space collapse. First of all, in all these queueing examples, the map $G$ is linear, i.e., we can write $Q_k(t) = \alpha_k W(t)$ for some deterministic coefficients $\alpha_k > 0$. This makes the queue length processes $\mv{Q} = (Q_k)$ live in a \emph{deterministic} one-dimensional manifold, namely, the space $\Mcal_Q = \{ (\alpha_k w): w \geq 0 \}$. In contrast, the coordinates $A_2, \ldots, A_{K+1}$ live in our case in the \emph{random} one-dimensional manifold $\Mcal(A_1)$.

Moreover, the reasons leading to this state space collapse phenomenon are also distinct. As explained above, in our case this comes from a matter of time scales while in queueing theory, this essentially comes from the law of large numbers since the workload is the sum of the residual service times over the number of customers.

\subsection*{Organization of the paper}

In the next section we introduce the general model considered in the present paper, which generalizes~\eqref{eq:transition-rates-a_n-critical} to the near-critical case, and state the main results, which formalize the results described previously. In Section~\ref{sec:analysis-SDE} we analyze the limiting SDE that generalizes~\eqref{eq:SL-intermediate-critical}, where we prove existence and uniqueness of solutions as well as a useful sample path property. These results rely on the analysis of the ODE that generalizes~\eqref{eq:ODE-critical}, and the deterministic results that are needed are proved in the Appendix~\ref{appendix:proof-ODE}. Sections~\ref{sec:proof-scaling-limit} and~\ref{sec:proof-outbreak} are devoted to the proofs of the main results: Section~\ref{sec:proof-scaling-limit} gives the proof for the scaling limits, i.e., the asymptotic behavior at the process level, and Section~\ref{sec:proof-outbreak} is concerned with the proof of the asymptotic behavior of the outbreak size. Finally, we conclude the paper by discussing in Section~\ref{sec:conjecture} the conjecture formulated in~\cite{Antal12:0}, and how it relates to our results.

\subsection*{Acknowledgements} I am extremely grateful to Remco van der Hofstad and Johan van Leeuwaarden for introducing me to the topic of this paper and for very stimulating discussions on this particular model. I would also like to thank the two anonymous reviewers whose comments greatly helped to improve the exposition of the paper.

\section{Main results} \label{sec:main-results}

\subsection{Notation} \label{sub:notation}

Let in the sequel $\Dcal$ be the set of real-valued, c\`adl\`ag functions. For $f \in \Dcal$ and $\varepsilon > 0$, we define $T_0(f) = \inf\{ t \geq 0: f(t) = 0 \}$, $T^\uparrow_\varepsilon(f) = \inf\{ t \geq 0: f(t) \geq \varepsilon \}$ and $T^\downarrow_\varepsilon(f) = \inf\{ t \geq 0: f(t) \leq \varepsilon \}$, with the convention $\inf \emptyset = +\infty$. If $t \geq 0$ and $f$ is c\`adl\`ag (real- or vector-valued) we consider $\theta_t$ the shift operator, defined by $\theta_t(f) = f(t \, + \cdot \,)$. The space of c\`adl\`ag functions with values in $\R^d$ is endowed with the topology of uniform convergence on compact sets.

In the sequel we fix some integer $K \geq 1$ and we consider the map $\bar \pi: \R^{K+2} \to \R^{K+1}$ defined by $\bar \pi(a_0, \ldots, a_{K+1}) = (a_0, a_2, \ldots, a_{K+1})$ (this unusual indexing of vectors will be convenient for our purposes).

\subsection{Model and main results}

We now present the full model investigated in the rest of the paper, which generalizes~\eqref{eq:transition-rates-a_n-critical} by allowing individual transition rates to be close to one. For each $n \geq 1$, let $\mv{a}_n(t) = (a_{n,k}(t), k = 0, \ldots, K+1)$ be the $K{+}2$-dimensional Markov process corresponding to a finite population of size $n$, i.e., $a_{n,0}(0) + \cdots + a_{n,K+1}(0) = n$, and with non-zero transition rates given for $a = (a_0, a_1, \ldots, a_{K+1}) \in \N^{K+2}$ by
\begin{equation} \label{eq:transition-rates-a_n}
	a \longrightarrow \begin{cases}
		a - e_k + e_{k+1} & \text{ at rate } (1+\delta_{n,k}) a_k, \ \ 1 \leq k \leq K,\\
		a - e_0 + e_k & \text{ at rate } (1+\varepsilon_{n,k}) a_k a_0/n, \ \ 1 \leq k \leq K,
	\end{cases}
\end{equation}
where $\delta_{n,k}, \varepsilon_{n,k} > -1$ for $k = 1, \ldots, K$, and we will always assume that $\delta_{n,k}, \varepsilon_{n,k} \to 0$ as $n \to +\infty$ for each $k = 1, \ldots, K$: the critical case~\eqref{eq:transition-rates-a_n-critical} is recovered by setting $\delta_{n,k} = \varepsilon_{n,k} = 0$. Moreover, it will be convenient to define $\delta_{n,0} = \delta_{n,K+1} = \varepsilon_{n,K+1} = -1$. We first state the results concerning the intermediate regime, which constitute the main results of the paper.

\begin{theorem} [Intermediate initial condition] \label{thm:SL-intermediate}
	Let $\beta = 1/(K{+}2)$ and assume that for each $k = 1, \ldots, K$, there exists $\gamma_k \in \R$ such that
	\begin{equation} \label{eq:gamma-intermediate}
		n^\beta \left( \varepsilon_{n,k} - \delta_{n,k} \right) \mathop{\longrightarrow}_{n \to +\infty} \gamma_k.
	\end{equation}
	
	Let $\mv{A}_n = (A_{n,k}, k = 0, \ldots, K+1)$ be the $K{+}2$-dimensional process defined as follows:
	\begin{equation} \label{eq:scaling-intermediate}
		A_{n,0}(t) = \frac{n - a_{n,0}(n^\beta t)}{n^{(K+1)\beta}} \ \text{ and } \ A_{n,k}(t) = \frac{a_{n,k}(n^\beta t)}{n^{k\beta}}, \ k = 1, \ldots, K+1, \ t \geq 0.
	\end{equation}
	
	If $\mv{A}_n(0) \to a \in [0,\infty)^{K+2}$, then the sequence of processes $(\mv{A}_n, n \geq 1)$ converges weakly as $n \to +\infty$ to the unique solution $\mv{A} = (A_k, 0 \leq k \leq K+1)$ of the SDE
	\begin{equation} \label{eq:SL-intermediate}
		\mv{A}(t) = a + \int_0^t b(\mv A(u)) \d u + \int_0^t \sigma(\mv A(u)) \d B(u),
	\end{equation}
	where $B$ is a standard $K{+}2$-dimensional Brownian motion and for $a = (a_0, \ldots, a_{K+1}) \in [0,\infty)^{K+2}$ we have $\sigma(a) = \text{\textnormal{diag}}(0, (2 a_1)^{1/2}, 0, \ldots, 0)$ and
	\[ \left\{\begin{array}{l}
		b_0(a) = b_{K+1}(a) = a_K,\\
		b_1(a) = (\gamma_1 - a_0) a_1,\\
		b_k(a) = a_{k-1} + (\gamma_k - a_0) a_k, \ \ k = 2, \ldots, K.
	\end{array}\right. \]
\end{theorem}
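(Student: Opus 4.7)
The plan is to follow the semimartingale approach, along the lines of Aldous~\cite{Aldous97:0}: use Dynkin's formula to decompose $\mv A_n(t) = \mv A_n(0) + \int_0^t b_n(\mv A_n(u)) \d u + \mv M_n(t)$, with the drift $b_n$ and the predictable quadratic variation of $\mv M_n$ both read off the infinitesimal generator of the rescaled Markov chain; verify the convergence of these characteristics, establish tightness of $(\mv A_n)$ in $\Dcal([0,\infty),\R^{K+2})$, and identify every subsequential limit as the unique solution of~\eqref{eq:SL-intermediate} using the well-posedness proved in Section~\ref{sec:analysis-SDE}.

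The key calculation is the convergence of the rescaled characteristics. Using $a_{n,0}/n = 1 - A_{n,0}/n^\beta$, the rescaled drift coefficient for $k = 2, \ldots, K$ takes the form $(1+\delta_{n,k-1}) A_{n,k-1} + n^\beta A_{n,k}\bigl[(1+\varepsilon_{n,k})(1 - A_{n,0}/n^\beta) - (1+\delta_{n,k})\bigr]$, and the $n^\beta$ prefactor is precisely the one absorbed by the near-critical assumption $n^\beta(\varepsilon_{n,k} - \delta_{n,k}) \to \gamma_k$; this yields $b_k(\mv A) = A_{k-1} + (\gamma_k - A_0) A_k$ in the limit, with analogous computations handling $k \in \{0,1,K+1\}$. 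The same bookkeeping for the predictable quadratic variation gives $\langle M_{n,1} \rangle_t \to 2\int_0^t A_1(u) \d u$, while the instantaneous QV rate for every other coordinate, as well as every cross-variation, is of order $n^{-k\beta}$ or smaller and vanishes in the limit. This analytic cancellation is the mechanism behind the state space collapse: the limiting diffusion matrix $\sigma\sigma^\top$ is supported on the single entry $(1,1)$.

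For tightness I would localise by setting $\tau_n^M = \inf\{t : \|\mv A_n(t)\| \geq M\}$: on $[0,\tau_n^M]$, the drift and the instantaneous QV rate are bounded by a constant depending only on $M$ and on uniform bounds for $(\delta_{n,k},\varepsilon_{n,k})$, so Aldous' criterion combined with Doob's inequality yields tightness of the stopped sequence. A stage-by-stage Gr\"onwall argument, started from an a priori second-moment bound for $A_{n,1}$ obtained from its own martingale, then shows that $\P(\tau_n^M \leq T) \to 0$ as $M \to \infty$ uniformly in $n$ for each fixed $T > 0$, promoting tightness of the stopped sequence to tightness on $[0,\infty)$.

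Since the jump sizes of $\mv A_n$ are of order $n^{-\beta}$, every subsequential limit $\mv A$ is continuous. The convergence of the drifts together with that of the predictable quadratic variations, combined with Rebolledo's criterion (cf.~\cite{Ethier86:0}), implies that $\mv M_n$ converges to a continuous Gaussian martingale with covariation $\int_0^\cdot \sigma\sigma^\top(\mv A(u)) \d u$, so $\mv A$ satisfies~\eqref{eq:SL-intermediate}. The uniqueness in law announced in Section~\ref{sec:analysis-SDE} then identifies the limit and upgrades subsequential convergence to full convergence. The hardest step will be the a priori moment bound underlying the localisation: the higher stages $A_{n,k}$ with $k \geq 2$ inherit their fluctuations from $A_{n,1}$ through the nonlinear coupling $A_{n,0} A_{n,k}$, so controlling their moments requires iterating Gr\"onwall-type estimates along the stages and relying on the precise cancellation enforced by~\eqref{eq:gamma-intermediate} to prevent the iteration from blowing up.
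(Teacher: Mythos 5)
Your proposal is correct and follows essentially the same route as the paper: read the drift and the carr\'e du champ off the generator of the rescaled chain, observe the cancellation produced by $n^\beta(\varepsilon_{n,k}-\delta_{n,k})\to\gamma_k$ and the vanishing of all quadratic (co)variations except $\langle M_{n,1}\rangle \to 2\int A_1$, prove tightness from a Gr\"onwall-type moment bound, and identify limits via a semimartingale convergence theorem together with the uniqueness of Section~\ref{sec:analysis-SDE}. The only cosmetic difference is in the tightness step: the paper avoids localisation by bounding $\E_a[\psi^i(\mv A_n(t))]$ directly for $\psi=\pi_0+\cdots+\pi_K$ (the quadratic drift terms $-A_0A_k$ have a favourable sign, so a single linear Gr\"onwall suffices and no stage-by-stage iteration or blow-up control is actually needed), whereas your stopped-process scheme would work equally well.
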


Since all the diffusion coefficients but one are equal to $0$, when convenient we will identify the $(K{+}2) \times (K{+}2)$ matrix $\sigma(a)$ and its only non-zero entry $(2 a_1)^{1/2}$. For the same reason, if we write $B = (B_0, \ldots, B_{K+1})$ then only the coordinate $B_1$ matters and we will identify $B$ and $B_1$, i.e., whenever convenient we will consider that $B$ is a one-dimensional standard Brownian motion; note that this is in line with the notation used in~\eqref{eq:SL-small-critical} and~\eqref{eq:SL-intermediate-critical}. We now state some properties of solutions to~\eqref{eq:SL-intermediate}.

\begin{prop}\label{prop:A}
	Consider the assumptions and notation of Theorem~\ref{thm:SL-intermediate}. Then $\mv{A}$ almost surely satisfies the following properties:
	\begin{enumerate}
		\item the two processes $A_0$ and $A_{K+1}$ are equal;
		\item if $K \geq 2$, then $A_0$ is strictly increasing, and its terminal value $A_0(\infty)$ is finite and satisfies $A_0(\infty) > \gamma_k$ for every $k = 2, \ldots, K$;
		\item $T_0(A_1) < +\infty$;
		\item for every $k = 2, \ldots, K$, $A_k(t) \to 0$ as $t \to +\infty$ but $T_0(A_k) = +\infty$.
	\end{enumerate}
\end{prop}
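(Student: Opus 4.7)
The plan is to prove the four claims essentially in the listed order, exploiting the decomposition that in~\eqref{eq:SL-intermediate} only $A_1$ is stochastic, the coordinates $A_2, \ldots, A_K$ are deterministic functions of $A_1$ through the ODE~\eqref{eq:ODE-critical}, and $A_0 = A_{K+1}$ equals $a_0 + \int_0^\cdot A_K\, \d u$. For (i), both $A_0$ and $A_{K+1}$ satisfy the deterministic equation $\d X = A_K\, \d t$ (one has $b_0 \equiv b_{K+1} = a_K$ and the corresponding diffusion coefficients vanish), and the initial conditions agree since the relation $A_{n,0}(0) - A_{n,K+1}(0) = \sum_{k=1}^K a_{n,k}(0)/n^{(K+1)\beta} = \sum_{k=1}^{K} A_{n,k}(0)\, n^{(k-K-1)\beta}$ tends to $0$; uniqueness for the ODE then gives $A_0 \equiv A_{K+1}$. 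The monotonicity part of (ii) follows once each coordinate is shown to stay non-negative ($A_1$ as a Feller-type diffusion absorbed at $0$; $\d A_k = A_{k-1}\, \d t \geq 0$ on $\{A_k = 0\}$ for $k \geq 2$; $A_0, A_{K+1}$ as integrals of $A_K \geq 0$). The strict monotonicity reduces to $A_K(t) > 0$ for every $t > 0$, which I establish together with (iv).

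For (iii), I would apply the Lamperti time change. Setting $I(t) = \int_0^t A_1(u)\, \d u$ and $\tau(t) = \inf\{s \geq 0 : I(s) > t\}$, a standard substitution and the Dambis--Dubins--Schwarz theorem give, for $\tilde X(t) := A_1(\tau(t))$ while the latter is defined,
\[
\tilde X(t) = A_1(0) + \int_0^t \bigl( \gamma_1 - A_0(\tau(s)) \bigr)\, \d s + \sqrt{2}\, W(t),
\]
for a standard Brownian motion $W$. Arguing by contradiction, assume $T_0(A_1) = +\infty$. By monotonicity of $A_0$, either $A_0(\infty) = +\infty$---in which case the drift of $\tilde X$ tends to $-\infty$ and $\tilde X$ almost surely hits $0$, contradicting $A_1 > 0$---or $A_0(\infty) < +\infty$, whence $\int_0^\infty A_K\, \d u < +\infty$, and integrating the ODE $\d A_k = A_{k-1}\, \d t + (\gamma_k - A_0) A_k\, \d t$ successively from $k = K$ down to $k = 2$ with $A_0$ bounded yields $\int_0^\infty A_k\, \d u < +\infty$ for every $k$, in particular for $k = 1$. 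In this second case, It\^o applied to $\sqrt{A_1}$ reveals $A_1$ as a squared Bessel of dimension $0$ perturbed by a bounded drift, which can be reduced via a local Girsanov change of measure to a pure squared Bessel and hence is absorbed at $0$ in finite real time---again a contradiction.

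For (iv) and the remainder of (ii), once $T := T_0(A_1) < +\infty$ is in hand, on $[T, +\infty)$ the coordinate $A_1$ vanishes and the remaining system $(A_0, A_2, \ldots, A_K)$ obeys the deterministic ``large initial condition'' ODE~\eqref{eq:SL-large-critical}. The analysis of this ODE, carried out in Section~\ref{sec:analysis-SDE} and Appendix~\ref{appendix:proof-ODE}, gives that $A_0$ has a finite limit $A_0(\infty)$ with $A_0(\infty) > \gamma_k$ for every $k = 2, \ldots, K$, that each $A_k$ ($k \geq 2$) tends to $0$ as $t \to +\infty$, and that the ODE preserves strict positivity once initialised strictly positive (so $T_0(A_k) = +\infty$). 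The required strict positivity of each $A_k$ at time $T$ comes from the Duhamel representation
\[
A_k(t) = A_k(0)\, e^{\int_0^t (\gamma_k - A_0)\d u} + \int_0^t A_{k-1}(s)\, e^{\int_s^t (\gamma_k - A_0)\d u}\, \d s
\]
together with induction on $k \geq 2$ and the fact that $A_1 > 0$ on $(0, T_0(A_1))$; the same representation also shows $A_K(t) > 0$ for every $t > 0$, giving the strict monotonicity of $A_0$ claimed in (ii).

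The main obstacle is (iii), specifically the case $A_0(\infty) < +\infty$: when $A_0$ stays bounded, the drift of the time-changed $\tilde X$ does not diverge and the ``drift to $-\infty$'' argument alone is insufficient; exploiting the Bessel-type behaviour of $A_1$ near zero, through a local Girsanov reduction or a direct comparison with a dimension-zero squared Bessel, is the delicate technical step.
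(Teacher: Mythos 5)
Your handling of (i), (ii) and (iv) follows the paper's architecture: once $T_0(A_1)<+\infty$ is known, shifting at that time reduces everything to the deterministic ODE~\eqref{eq:ODE}, and your Duhamel representation is exactly the integral form~\eqref{eq:integral-form} used in Appendix~\ref{appendix:proof-ODE} (Lemmas~\ref{lemma:stab-ODE} and~\ref{lemma:properties-ODE}); your verification that $a_0=a_{K+1}$ in (i) matches the remark following Proposition~\ref{prop:A}. The genuine gap is in (iii) (the content of Lemma~\ref{lemma:properties-SDE}), precisely in the sub-case you flag as delicate: $A_0$ bounded, hence $\int_0^\infty A_1<+\infty$. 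There you propose to conclude by a ``local Girsanov reduction'' of $A_1$ to a dimension-zero squared Bessel process, or by direct comparison with one. Neither works. A change of measure that is equivalent only on each $\Fcal_T$ does not transfer the almost-sure finiteness of $T_0$, because $\{T_0=+\infty\}$ belongs to no $\Fcal_T$; the canonical counterexample is Feller diffusion with drift $\gamma_1>0$, which is locally equivalent to the driftless one and yet survives forever with positive probability --- your argument applied verbatim would ``prove'' that a supercritical Feller diffusion dies out almost surely. A pathwise comparison with a driftless process from above is also unavailable here, since on the event in question $A_0<\gamma_1$ and the drift $(\gamma_1-A_0)A_1$ is positive. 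What is missing is a mechanism that actually converts $\int_0^\infty A_1<+\infty$ (hence $\liminf_t A_1(t)=0$) into extinction. The paper's device is a renewal argument: set $\vartheta_0=0$ and $\vartheta_{i+1}=\inf\{t\ge 1+\vartheta_i: A_1(t)\le 1\}$, all finite on the bad event; after each $\vartheta_i$, dominate $A_1(\vartheta_i+\cdot)$ by a Feller diffusion with drift $\gamma_1$ started from $1$, which hits $0$ with some fixed probability $p>0$; the strong Markov property bounds the probability that the first $I$ attempts all fail by $(1-p)^I\to 0$.

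Two smaller points. First, in your other sub-case the time-changed process $\tilde X$ is only defined on $[0,\int_0^\infty A_1)$, so ``the drift tends to $-\infty$, hence $\tilde X$ hits $0$'' is valid only when that clock is infinite; when it is finite you are back in the hard case. The cleaner dichotomy (the paper's) is on whether $A_0$ ever exceeds $\gamma_1$: after $T^\uparrow_{\gamma_1}(A_0)$ the drift of $A_1$ is nonpositive and a pathwise comparison with driftless Feller diffusion (Rogers and Williams, Theorem~V.43.1) finishes that case with no time change at all. Second, the backward integration you invoke to get $\int_0^\infty A_k<+\infty$ for all $k$ is exactly item~\ref{ODE:boundedness} of Lemma~\ref{lemma:properties-ODE}, so that step is fine.
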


Note that the first property of $\mv{A}$ above comes from the relation $\sum_{k=0}^{K+1} a_{n,k}(t) = n$, which translates after scaling to $\sum_{k=1}^{K+1} n^{k \beta} A_{n,k}(t) = n^{(K+1) \beta} A_{n,0}(t)$. Next, we turn to the asymptotic behavior of the outbreak size $A_{n, K+1}(\infty)$: the following result essentially states that we can interchange the limits $n \to +\infty$ and $t \to +\infty$.

\begin{theorem}\label{thm:outbreak}
	Under the assumptions and notation of Theorem~\ref{thm:SL-intermediate}, the renormalized outbreak size $A_{n,K+1}(\infty)$ converges weakly as $n \to +\infty$ toward $A_{K+1}(\infty)$.
\end{theorem}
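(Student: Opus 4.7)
The plan is to upgrade the process-level convergence of Theorem~\ref{thm:SL-intermediate} at finite times to convergence at $t = \infty$ by combining the monotonicity of $A_{n,K+1}$ with a uniform tail estimate. Three structural observations drive the argument: (i) $A_{n,K+1}$ is nondecreasing in $t$, since stage $K{+}1$ only receives transitions; (ii) by conservation of population, $A_{n,K+1}(\infty) = A_{n,0}(\infty)$ almost surely, since all infected individuals eventually get removed; and (iii) by Proposition~\ref{prop:A}, the limit satisfies $A_{K+1}(T) \nearrow A_{K+1}(\infty) < \infty$ almost surely as $T \to \infty$.

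From Theorem~\ref{thm:SL-intermediate} applied at a fixed $T$, $A_{n,K+1}(T) \Rightarrow A_{K+1}(T)$; combined with (i) and (iii), this yields the ``liminf'' direction, namely that any subsequential weak limit of $A_{n,K+1}(\infty)$ stochastically dominates $A_{K+1}(\infty)$. The matching upper bound reduces to a uniform tail tightness estimate: for every $\eta > 0$, one must find $T$ and $n_0$ such that $\mathbb{P}\bigl(A_{n,K+1}(\infty) - A_{n,K+1}(T) > \eta\bigr) < \eta$ for all $n \geq n_0$.

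To establish this tail bound, I would use a Foster--Lyapunov argument. A direct generator computation shows that, in scaled time, the drift of $A_{n,k}$ for $k \geq 2$ is $(1+\delta_{n,k-1}) A_{n,k-1} + \bigl(n^\beta(\varepsilon_{n,k} - \delta_{n,k}) - (1 + \varepsilon_{n,k}) A_{n,0}\bigr) A_{n,k}$, with the feed-forward term absent for $k = 1$. Once $A_{n,0}$ exceeds $\max_k \gamma_k$ by some margin $\lambda > 0$, each stage has a strictly negative self-decay rate for $n$ large. Choosing positive weights $c_k$ decreasing geometrically in $k$ with ratio proportional to $\lambda$, the Lyapunov function $V(a) = \sum_{k=1}^{K} c_k a_k$ satisfies $\mathcal{L}_n V \leq -(\lambda/2) V$ on the event $\{A_{n,0} \geq \max_k \gamma_k + \lambda\}$, where $\mathcal{L}_n$ denotes the generator of the scaled process. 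Proposition~\ref{prop:A} ensures $A_0(T) > \max_k \gamma_k + \lambda$ for $T$ large and $\lambda$ small enough; Theorem~\ref{thm:SL-intermediate} together with monotonicity of $A_{n,0}$ transfers this to the pre-limit at some deterministic $T$, so with high probability the Lyapunov inequality holds for all $t \geq T$ uniformly in $n$ large. Dynkin's formula and Gronwall then give $\mathbb{E}[V(\mv{A}_n(T+t))] \leq e^{-\lambda t/2} \mathbb{E}[V(\mv{A}_n(T))]$, whence $\mathbb{E}\bigl[\int_T^\infty A_{n,K}(s)\, \d s\bigr] \to 0$ uniformly in $n$ as $T \to \infty$. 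The tail bound on $A_{n,K+1}(\infty) - A_{n,K+1}(T)$ then follows from its semimartingale decomposition: the compensator has drift approximately $A_{n,K}$, and the associated martingale part has quadratic variation of order $n^{-K\beta}$ times the same integral, hence is negligible.

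The main obstacle is the correct choice of the Lyapunov weights $c_k$: because of the feed-forward term $A_{n,k-1}$ in the drift of $A_{n,k}$, the weight $c_{k+1}$ must be small relative to $c_k \lambda$, so the $c_k$ decrease geometrically with a ratio shrinking as $\lambda \downarrow 0$. A secondary technical issue is controlling the martingale fluctuations of $V(\mv{A}_n)$ uniformly in $n$ in order to upgrade the expectation bound to a high-probability one, which should follow from Doob's inequality since the diffusion coefficient of $A_{n,k}$ vanishes in the limit for $k \geq 2$ and is of order $A_{n,1}^{1/2}$ (itself controlled by $V$) for $k = 1$.
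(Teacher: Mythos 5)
Your overall strategy---monotonicity of $A_{n,K+1}$ plus a tail bound $\P\bigl(A_{n,K+1}(\infty)-A_{n,K+1}(T)>\eta\bigr)<\eta$ uniform in large $n$---is sound and genuinely different from the paper's proof, which instead iterates hitting-time convergence stage by stage via couplings with branching processes and a Lamperti time-change. However, your tail bound has a concrete gap at stage $1$. You invoke Proposition~\ref{prop:A} to claim $A_0(T)>\max_k\gamma_k+\lambda$ eventually, but Proposition~\ref{prop:A} only asserts $A_0(\infty)>\gamma_k$ for $k=2,\ldots,K$; it says nothing about $\gamma_1$, and indeed the proof of Lemma~\ref{lemma:properties-SDE} explicitly allows the event $\{T^\uparrow_{\gamma_1}(A_0)=+\infty\}$ to have positive probability. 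So when $\gamma_1>\max_{2\le k\le K}\gamma_k$, the event on which your Lyapunov inequality is supposed to hold need not have probability close to one, and the $a_1$-coordinate of $V$ can have strictly positive drift forever.

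Dropping $a_1$ from $V$ does not repair this: the feed-forward term $c_2(1+\delta_{n,1})A_{n,1}$ in the drift of $A_{n,2}$ forces you to control $\int_T^\infty A_{n,1}$ uniformly in $n$, which amounts to showing $\lim_{T}\limsup_n\P\bigl(T_0(A_{n,1})>T\bigr)=0$. This does \emph{not} follow from the finite-time convergence $\mv{A}_n\Rightarrow\mv{A}$: hitting times are not continuous functionals, and ``$A_{n,1}(T)$ is small with high probability'' does not imply ``$A_{n,1}$ is extinct shortly after $T$'' (the prelimit process could linger near, or rebound from, small values). Supplying this is precisely the content of the paper's first step, which couples $A_{n,1}$ after $T^\downarrow_\varepsilon(A_{n,1})$ with a branching process converging to a Feller diffusion started at $\varepsilon$ and uses $T_0(Z_1)\Rightarrow 0$ as $\varepsilon\to 0$; note that for $K=1$ your entire argument reduces to exactly this missing piece. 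If you add such a coupling step for stage $1$ (after which $A_{n,1}\equiv 0$ and the threshold $\max_{2\le k\le K}\gamma_k+\lambda$ covered by Proposition~\ref{prop:A} suffices), the remaining Foster--Lyapunov computation for stages $2,\ldots,K$ appears workable and would give a cleaner alternative to the paper's iterated time-change argument.
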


Let us conclude the presentation of results in the intermediate regime by comparing with the SIR case $K = 1$. In the case $K = 1$ the natural notion of criticality is that every infected individual tries to infect in average (close to) one other individual. The notion of criticality that we consider here is at first sight different, since every infected individual tries to infect in average one other individual \emph{in each stage}. If an individual starts in stage $k$, it will therefore make in average $K{-}k{+}1$ infection attempts. But the two notions actually coincide. Indeed, Theorem~\ref{thm:SL-intermediate} shows that individuals in the last stage $K$ of the epidemic dominate: there are $n^{K/(K+2)}$ such individuals and $n^{k/(K+2)} \ll n^{K/(K+2)}$ individuals at stage $k = 1, \ldots, K-1$. Thus with overwhelming probability, an individual eventually infected has actually started the epidemic in stage $K$ and has made in average one infection attempt. Beware however: this does not mean that only stage $K$ matters, since $a_{n,K}$ becomes of the order of $n^{K/(K+2)}$ thanks to the help of the individuals in the previous stages!
	
Moreover, according to Proposition~\ref{prop:A}, the case $K = 1$ is the only case where $A_{K+1}(\infty)$ is reached in finite time. Indeed, for $K = 1$ the dynamic is frozen after the time $T_0(A_1)$ which is finite, while if $K \geq 2$ then $A_{K+1}$ remains strictly increasing at all times. As will be discussed in Section~\ref{sec:proof-outbreak} when proving Theorem~\ref{thm:outbreak}, this difference creates an additional difficulty in order to control the asymptotic behavior of the outbreak size.
\\

We now state the result for the large regime.

\begin{prop}[Large initial condition] \label{prop:SL-large}
	Fix some sequence $n^{1/(K+2)} \ll \alpha_{n,1} \ll n$ and for $n \geq 1$ define
	\begin{equation} \label{eq:scaling-constants-large}
\tau_n = \left( \frac{n}{\alpha_{n,1}} \right)^{1/(K+1)}, \ \alpha_{n,0} = \frac{n}{\tau_n} \ \text{ and } \ \alpha_{n,k} = \tau_n^{k-1} \alpha_{n,1} \ \text{ for } \ k = 2, \ldots, K+1.
	\end{equation}
	
	Assume that for each $k = 1, \ldots, K$, there exists $\gamma_k \in \R$ such that
	\begin{equation} \label{eq:gamma-large}
		\tau_n \left( \varepsilon_{n,k} - \delta_{n,k} \right) \mathop{\longrightarrow}_{n \to +\infty} \gamma_k.
	\end{equation}
	
	Let $\mv{A}_n = (A_{n,k}, k = 0, \ldots, K+1)$ be the $K{+}2$-dimensional renormalized process defined as follows:
	\begin{equation} \label{eq:scaling-large}
		A_{n,0}(t) = \frac{n - a_{n,0}(\tau_n t)}{\alpha_{n,0}} \ \text{ and } \ A_{n,k}(t) = \frac{a_{n,k}(\tau_n t)}{\alpha_{n,k}}, \ k = 1, \ldots, K+1, \ t \geq 0.
	\end{equation}
	
	If $\mv{A}_n(0) \to a \in [0,\infty)^{K+2}$, then the sequence of processes $(\mv{A}_n, n \geq 1)$ converges weakly as $n \to +\infty$ to the unique solution of the ODE
	\begin{equation} \label{eq:SL-large}
		\mv{A}(t) = a + \int_0^t b(\mv{A}(u)) \d u,
	\end{equation}
	with $b$ defined in Theorem~\ref{thm:SL-intermediate}.
\end{prop}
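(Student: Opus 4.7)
The plan is to use the standard semimartingale approach for convergence of Markov chains to an ODE limit. Write $\mv A_n(t) = \mv A_n(0) + D_n(t) + M_n(t)$ where $D_n(t) = \int_0^t b_n(\mv A_n(u)) \d u$ is the Dynkin drift extracted from~\eqref{eq:transition-rates-a_n} and $M_n$ is a local martingale. The limit ODE~\eqref{eq:SL-large} admits a unique global solution: $b$ is polynomial hence locally Lipschitz, the positive orthant is preserved (on each face $\{A_k = 0\}$ the drift $b_k$ has the correct sign), the quantity $A_0 - A_{K+1}$ is conserved, and the bound $\d A_k/\d t \leq A_{k-1} + \max(0,\gamma_k) A_k$ combined with Gronwall yields global a priori bounds. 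It thus suffices to prove (a) $b_n \to b$ locally uniformly, (b) $M_n \to 0$ uniformly on compacts in probability, and (c) $(\mv A_n)$ is tight.

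For (a), direct calculation on the generator applied to $\mv A_n$ shows that for $1 \leq k \leq K$,
\[ b_{n,k}(\mv A_n) = (1+\delta_{n,k-1}) \frac{\tau_n \alpha_{n,k-1}}{\alpha_{n,k}} A_{n,k-1} + A_{n,k} \Bigl[ \tau_n(\varepsilon_{n,k}-\delta_{n,k}) - (1+\varepsilon_{n,k}) \tau_n \sum_{j=1}^{K+1} \frac{\alpha_{n,j}}{n} A_{n,j} \Bigr], \]
with the convention $1+\delta_{n,0}=0$, so the first term disappears for $k=1$. Three scaling identities are key: $\tau_n \alpha_{n,k-1}/\alpha_{n,k} = 1$ for $k \geq 2$; $\tau_n \alpha_{n,j}/n = \tau_n^{j-K-1}$, which equals $1$ for $j = K+1$ and vanishes for $j \leq K$; and the conservation law rewritten as $A_{n,0} = \sum_{j=1}^{K+1} \tau_n^{j-K-1} A_{n,j}$, forcing $A_{n,0} - A_{n,K+1} \to 0$. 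Combined with~\eqref{eq:gamma-large}, these give $b_{n,k}(\mv A_n) \to b_k(\mv A)$; the coordinates $k=0$ and $k=K+1$ are handled similarly and each yield $A_K$ in the limit.

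For (b), the scaled jumps of $A_{n,k}$ have magnitude $1/\alpha_{n,k}$ and the total jump rate touching coordinate $k$ is of order $a_{n,k-1} + a_{n,k}$, hence
\[ \langle M_{n,k} \rangle_t = O(\tau_n/\alpha_{n,k}) \int_0^t \bigl( A_{n,k-1}(u) + A_{n,k}(u) \bigr) \d u. \]
The assumption $\alpha_{n,1} \gg n^{1/(K+2)}$ combined with $\tau_n = (n/\alpha_{n,1})^{1/(K+1)}$ gives $\tau_n/\alpha_{n,1} \to 0$, and since $\alpha_{n,k} \geq \alpha_{n,1}$ for $k \geq 1$ (and $\alpha_{n,0} = n/\tau_n$ handles $k=0$), one gets $\tau_n/\alpha_{n,k} \to 0$ for every $k$. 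Doob's inequality then yields $M_n \to 0$ uniformly on compacts in probability, as long as $\mv A_n$ is kept bounded.

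To conclude, one localizes with the stopping times $\tau_n^R = \inf\{t : \|\mv A_n(t)\| \geq R\}$ and applies the Aldous--Rebolledo criterion to $\mv A_n(\cdot \wedge \tau_n^R)$: the bounds on drift and quadratic variation give the required modulus-of-continuity estimates, while the vanishing jump sizes $1/\alpha_{n,k}$ force $C$-tightness. Any weak limit solves~\eqref{eq:SL-large} up to exit of the ball of radius $R$, and uniqueness of the ODE identifies it with $\mv A(\cdot \wedge \tau^R)$. The main technical point is removing the localization: this relies on the a priori Gronwall bound for $\mv A$, which shows that for any fixed $T$ and $\varepsilon > 0$ there exists $R$ with $\P(\tau_n^R \leq T) \leq \varepsilon$ for $n$ large enough, transferring the localized convergence to convergence on $[0,T]$.
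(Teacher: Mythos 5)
Your proposal is correct and follows essentially the same route as the paper: the same semimartingale decomposition read off the generator, the same scaling identities ($\tau_n\alpha_{n,k-1}/\alpha_{n,k}=1$, $\tau_n\alpha_{n,0}/n=1$, $\tau_n/\alpha_{n,k}\to 0$) identifying the drift and killing the martingale part via Doob, and uniqueness of the limit ODE to close the identification. The only difference is in how boundedness is handled: where you localize with exit times of balls and then remove the cutoff using the deterministic a priori bound, the paper proves unlocalized moment bounds on $\psi=\pi_0+\cdots+\pi_K$ and $\psi^2$ via a Gronwall estimate on $\Omega_n(\psi^i)$ (Lemma~\ref{lemma:bound-psi}) and verifies Aldous' tightness criterion globally — both variants are standard and valid here.
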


Note that this result is coherent with Theorem~\ref{thm:SL-intermediate}, in the sense that the limit of $\theta_{T_0(A_{n,1})} \circ \bar \pi \circ \mv{A}_n = \bar \pi \circ \theta_{T_0(A_{n,1})} \circ \mv{A}_n$ is the same whether we consider this process as the process $\bar \pi \circ \mv{A}_n$ shifted at time $T_0(A_{n,1})$ and then use Theorem~\ref{thm:SL-intermediate}, or whether we consider this process as a multistage epidemic process with $\bar K = K-1$ stages started from a large initial condition and then use Proposition~\ref{prop:SL-large}.
\\

We complete the results of Theorem~\ref{thm:SL-intermediate} and Proposition~\ref{prop:SL-large} by studying the case of a small initial condition $a_{n,1}(0) \ll n^{1/(K+2)}$. When $a_{n,1}(0)$ converges to some finite number, the sequence of processes $(a_{n,k}, k = 1, \ldots, K+1)$ converges to the multitype branching process given by~\eqref{eq:transition-rates-BP-critical}. As the next result shows, such a branching approximation continues to be valid in the small regime.

\begin{prop} [Small initial condition] \label{prop:SL-small}
	Fix some sequence $1 \ll \alpha_{n,1} \ll n^{1/(K+2)}$ and assume that for each $k = 1, \ldots, K$, there exists $\gamma_k \in \R$ such that
	\begin{equation} \label{eq:gamma-small}
		\alpha_{n,1} \left( \varepsilon_{n,k} - \delta_{n,k} \right) \mathop{\longrightarrow}_{n \to +\infty} \gamma_k.
	\end{equation}
	
	Let $\mv{A}_n = (A_{n,k}, k = 0, \ldots, K+1)$ for $n \geq 1$ be the $K{+}2$-dimensional renormalized process defined as follows:
	\begin{equation} \label{eq:scaling-small}
		A_{n,0}(t) = \frac{n - a_{n,0}(\alpha_{n,1} t)}{\alpha_{n,1}^{K+1}} \ \text{ and } \ A_{n,k}(t) = \frac{a_{n,k}(\alpha_{n,1} t)}{\alpha_{n,1}^k}, \ k = 1, \ldots, K+1, \ t \geq 0.
	\end{equation}
	
	If $\mv{A}_n(0) \to a \in [0,\infty)^{K+2}$, then the sequence of processes $(\mv{A}_n, n \geq 1)$ converges weakly as $n \to +\infty$ to the unique solution of the SDE
	\begin{equation} \label{eq:SL-small}
		\mv{A}(t) = a + \int_0^t b^S(\mv{A}(u)) \d u + \int_0^t \sigma(\mv{A}(u)) \d B(u),
	\end{equation}
	where $\sigma$ is as in Theorem~\ref{thm:SL-intermediate} and $b^S$ is given by
	\[ \left\{\begin{array}{l}
		\smallskip b^S_0(a) = b^S_{K+1}(a) = a_K,\\
		\smallskip b^S_1(a) = \gamma_1 a_1,\\
		b^S_k(a) = a_{k-1} + \gamma_k a_k, \ \ k = 2, \ldots, K.
	\end{array}\right. \]
\end{prop}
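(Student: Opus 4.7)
The plan is to follow the semimartingale strategy that underpins Theorem~\ref{thm:SL-intermediate}: apply the generator of $\mv{a}_n$ to the coordinate maps to obtain a Doob decomposition $\mv{A}_n = \mv{A}_n(0) + \int_0^\cdot b_n(\mv{A}_n(u))\,\d u + \mv{M}_n$, establish tightness of $(\mv{A}_n)$ in $\Dcal^{K+2}$, identify the limits of the drift $b_n$ and of the predictable quadratic variation $\langle \mv{M}_n \rangle$, and then conclude by the martingale characterization of solutions to~\eqref{eq:SL-small} together with weak uniqueness. The small regime is in fact substantially simpler than the intermediate one because the depletion feedback is asymptotically negligible.

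For the drift of the first coordinate, the generator yields $\bigl[(1+\varepsilon_{n,1})a_{n,0}(\alpha_{n,1}t)/n - (1+\delta_{n,1})\bigr]\alpha_{n,1}A_{n,1}(t)$; using $a_{n,0}/n = 1 - \alpha_{n,1}^{K+1} A_{n,0}/n$ together with $\alpha_{n,1}^{K+2} \ll n$, this splits into $\alpha_{n,1}(\varepsilon_{n,1}-\delta_{n,1})A_{n,1}(t) \to \gamma_1 A_1(t)$ and a vanishing remainder of order $\alpha_{n,1}^{K+2}/n$, which is precisely the mechanism by which the $-a_0 a_1$ contribution in $b_1$ drops out of $b^S_1$. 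For $k = 2, \ldots, K$ an analogous computation gives drift $(1+\delta_{n,k-1})A_{n,k-1}(t) + \alpha_{n,1}(\varepsilon_{n,k}-\delta_{n,k}) A_{n,k}(t) + o(1) \to A_{k-1}(t) + \gamma_k A_k(t)$, and both $A_{n,0}$ and $A_{n,K+1}$ inherit a drift $(1+\delta_{n,K}) A_{n,K}(t) \to A_K(t)$. For the martingale brackets, the $k=1$ coordinate has quadratic variation $\int_0^t \bigl[(1+\delta_{n,1}) + (1+\varepsilon_{n,1})a_{n,0}(\alpha_{n,1}u)/n\bigr]A_{n,1}(u)\,\d u \to \int_0^t 2 A_1(u)\,\d u$, whereas the brackets of $A_{n,k}$ for $k = 2, \ldots, K+1$ and of $A_{n,0}$ are of order $\alpha_{n,1}^{1-k}$ and $\alpha_{n,1}^{-(K+1)}$ respectively, and therefore vanish; the cross-variations vanish by a similar estimate, so only $A_1$ retains diffusive fluctuations in the limit.

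Tightness rests on a uniform moment bound for $\sup_{t \leq T} A_{n,1}(t)$, which I would obtain either from Doob's inequality applied to a suitably stopped version of the $L^2$-martingale associated to $A_{n,1}$, or from a pathwise coupling with the linear multitype branching process~\eqref{eq:transition-rates-BP-critical} (now with rates $(1+\delta_{n,k})$ and $(1+\varepsilon_{n,k})$ in place of $1$) which stochastically dominates $\mv{a}_n$ coordinatewise; the analogous bounds for $k \geq 2$ follow by induction on $k$ from the drift estimate above, and Aldous--Rebolledo then closes the tightness argument on each coordinate. Weak uniqueness of~\eqref{eq:SL-small} is easy, since the first equation is the autonomous Feller-type SDE $\d A_1 = \gamma_1 A_1\,\d t + (2 A_1)^{1/2}\d B$, for which pathwise uniqueness follows from the Yamada--Watanabe condition on the H\"older-$1/2$ diffusion coefficient; given $A_1$, the remaining $A_2, \ldots, A_K$ solve an explicit cascade of linear non-autonomous ODEs $\d A_k = (A_{k-1} + \gamma_k A_k)\,\d t$ and are thus uniquely determined by variation of constants, while $A_0 = A_{K+1} = \int_0^\cdot A_K(s)\,\d s$ reconstructs the zeroth and last coordinates. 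The main delicate point I anticipate is obtaining the uniform-in-$n$ control on $\sup_{t \leq T} A_{n,1}(t)$ despite the fact that the natural time scale $\alpha_{n,1}$ diverges, which is where the assumption $\alpha_{n,1}(\varepsilon_{n,k} - \delta_{n,k}) = O(1)$ provided by~\eqref{eq:gamma-small} becomes crucial.
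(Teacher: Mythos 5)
Your proposal is correct and follows essentially the same route as the paper: the paper also computes $\Omega_n(\pi_k)$ and $\Bcal_n(\pi_k)$ explicitly, observes that in the small regime $\tau_n\alpha_{n,0}/n = \alpha_{n,1}^{K+2}/n \to 0$ (killing the depletion term and producing $b^S$) while $\tau_n/\alpha_{n,k} \to \indicator{k=1}$ (leaving the diffusion term only in the first coordinate), proves tightness from Gronwall-type moment bounds on the coordinates, and identifies the limit via the semimartingale characteristics, with uniqueness reduced exactly as you describe to the Feller SDE for $A_1$ plus a linear ODE cascade. The only differences are cosmetic choices of standard machinery (Aldous--Rebolledo versus Billingsley's stopping-time increment criterion, and the optional branching-process coupling, which the paper itself uses elsewhere).
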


As discussed in the introduction for the strictly critical case $\delta_{n,k} = \varepsilon_{n,k} = 0$, the limiting diffusion~\eqref{eq:SL-small} obtained in the small regime is the same as the limit of the branching process corresponding to the infinite population setting, i.e., where $a_0 / n = 1$ in~\eqref{eq:transition-rates-a_n} (this could be proved using the techniques of the present paper). Thus $a_{n,1}(0) \propto n^{1/(K+2)}$ is the threshold at which finite-size population effects (or depletion of points effects) begin to kick in: this is the threshold at which the branching approximation ceases to be valid.

Note also that for $\mv{A} = (A_k, k = 0, \ldots, K+1)$ satisfying~\eqref{eq:SL-small}, $A_1$ is Feller diffusion with drift $\gamma_1$, i.e., is the unique solution to the SDE $\d A_1 = \gamma_1 A_1 \d t + (2 A_1)^{1/2} \d B$ (see~\eqref{eq:SDE-Feller} below), and then $A_k$ for $k = 2, \ldots, K+1$ is given recursively by $A'_k = \gamma_k A_k + A_{k-1}$ (here and in the sequel, prime denotes differentiation with respect to the time variable). In particular, existence and uniqueness of solutions to~\eqref{eq:SL-small} follow immediately.
\\

The next three sections are devoted to proving these results. In the next section we prove uniqueness and existence to solutions of~\eqref{eq:SL-intermediate}, and we also prove Proposition~\ref{prop:A}. The scaling limits results of Theorem~\ref{thm:SL-intermediate} and Propositions~\ref{prop:SL-large} and~\ref{prop:SL-small} are proved in Section~\ref{sec:proof-scaling-limit}, and the proof of Theorem~\ref{thm:outbreak} on the asymptotic behavior of the outbreak size is given in Section~\ref{sec:proof-outbreak}.

\section{Analysis of the SDE~\eqref{eq:SL-intermediate}} \label{sec:analysis-SDE}

As mentioned in the introduction, the fact that $\sigma(a) = \text{\textnormal{diag}}(0, (2 a_1)^{1/2}, 0, \ldots, 0)$ is a manifestation of the state space collapse property, and it makes the process $\bar \pi \circ \mv{A}$ deterministically obtained from $A_1$ by an ODE. More precisely, we consider in the sequel $F: \R \times \R^{K+1} \to \R^{K+1}$ the function defined by $F(a_1,\bar \pi(a)) = \bar \pi(b(a))$ for $a \in \R^{K+2}$ and with $b$ as in Theorem~\ref{thm:SL-intermediate} (when all parameters $\gamma_k = 0$, this coincides with the function $F$ in~\eqref{eq:ODE-critical}). With this notation, we see that if $\mv{A}$ satisfies~\eqref{eq:SL-intermediate}, then $\bar \pi \circ \mv{A}$ is a solution of the ODE
\begin{equation} \label{eq:ODE}
	\frac{\d x(t)}{\d t} = F(y(t), x(t)), \ x(0) = \bar a,
\end{equation}
with $y = A_1$. The following properties of this ODE will be needed: their proof is postponed to the Appendix~\ref{appendix:proof-ODE}.

\begin{lemma} \label{lemma:stab-ODE}
	For any $\bar a \in [0,\infty)^{K+1}$ and any continuous function $y: [0,\infty) \to [0,\infty)$, the ODE~\eqref{eq:ODE} has a unique solution defined on $[0,\infty)$.
\end{lemma}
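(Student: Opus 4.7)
The plan is a standard ODE argument: local Picard-Lindel\"of for existence and uniqueness, followed by a priori bounds to rule out blow-up. Writing out $F$ explicitly from the definition of $b$, and using $x = (x_0, x_2, \ldots, x_{K+1})$ together with the identification $y = x_1$, the system reads
$$x_0' = x_K, \quad x_2' = y + (\gamma_2 - x_0) x_2, \quad x_k' = x_{k-1} + (\gamma_k - x_0) x_k \ \ (3 \leq k \leq K), \quad x_{K+1}' = x_K.$$
Since $F$ is polynomial in $(y, x)$, it is jointly continuous and locally Lipschitz in $x$ uniformly on compact sets of $y$; hence classical theory provides a unique maximal solution on some interval $[0, \tau)$ with $\tau \in (0, \infty]$. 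The task reduces to showing $\tau = +\infty$.

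The key step is a priori nonnegativity, followed by polynomial bounds on every finite subinterval. Using $y \geq 0$, I would prove by induction on $k$ that $x_k \geq 0$ on $[0, \tau)$ for $k = 2, \ldots, K$ via the variation-of-constants formula
$$x_k(t) = e^{\int_0^t (\gamma_k - x_0(s)) \d s} x_k(0) + \int_0^t e^{\int_s^t (\gamma_k - x_0(u)) \d u} x_{k-1}(s) \d s,$$
with the convention $x_1 \equiv y$; nonnegativity of $x_0$ and $x_{K+1}$ is then immediate from $x_0' = x_{K+1}' = x_K \geq 0$ and $\bar a \in [0,\infty)^{K+1}$. For the quantitative bound on a finite window $[0, T] \cap [0, \tau)$, the nonnegativity of $x_0$ turns the coupling $-x_0 x_k$ into a stabilizing term, yielding $x_k' \leq x_{k-1} + \gamma_k^+ x_k$ for $k = 2, \ldots, K$. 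Iterating Gronwall's inequality in $k$, starting from $\sup_{[0,T]} y < \infty$ (which is finite by continuity of $y$), bounds $x_2, \ldots, x_K$ in sup-norm on $[0, T]$; then $x_0(t) \leq x_0(0) + T \sup_{[0,T]} x_K$ and likewise $x_{K+1}$ are controlled.

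These bounds preclude any finite-time blow-up, forcing $\tau = +\infty$, and global uniqueness follows from the local version. I do not expect any analytically deep obstacle: the main (mild) subtlety is that the a priori bound relies on nonnegativity of $x_0$, so one must establish nonnegativity on the maximal interval of existence before invoking Gronwall and closing the loop. Everything else amounts to polynomial algebra and a linear-ODE variation-of-constants argument.
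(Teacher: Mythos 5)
Your proposal is correct and follows essentially the same route as the paper's proof: local Picard--Lindel\"of for $F$ locally Lipschitz, the variation-of-constants representation (which is exactly the paper's integral form~\eqref{eq:integral-form}) to get nonnegativity of all coordinates, and then the differential inequality $x_k' \leq x_{k-1} + \gamma_k^+ x_k$ together with Gronwall's lemma to exclude finite-time blow-up. No gaps; the one subtlety you flag (establishing nonnegativity before invoking Gronwall) is handled in the same order in the paper.
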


\begin{lemma} \label{lemma:properties-ODE}
	Fix $\bar a = (a_0, a_2, \ldots, a_{K+1}) \in [0,\infty)^{K+1}$ and $y: [0,\infty) \to [0,\infty)$ a continuous function, and let $x = (x_0, x_2, \ldots, x_{K+1})$ be the unique solution to~\eqref{eq:ODE} given by Lemma~\ref{lemma:stab-ODE}. Then the function $x_0$ is non-decreasing and its limit $x_0(\infty)$ as $t \to +\infty$ exists in $[0,\infty]$. Moreover:
	\begin{enumerate}
		\item \label{ODE:>0} if $y(0) > 0$ or $a_2 > 0$, then $x_k(t) > 0$ for every $k = 0, 2, \ldots, K$ and $t > 0$;
		\item \label{ODE:boundedness} if $x_k(t) > 0$ for every $k = 2, \ldots, K$ and $t > 0$ and $x_0$ is bounded, then $\int_0^\infty y < +\infty$ and moreover $\int_0^\infty x_k < +\infty$ and $x_0(\infty) > \gamma_k$ for every $k = 2, \ldots, K$;
		\item \label{ODE:to0} if $y(t) = 0$ for all $t \geq 0$ and $a_2 > 0$, then $x_0$ is bounded and $x_k(t) \to 0$ as $t \to +\infty$ for every $k = 2, \ldots, K$.
	\end{enumerate}
\end{lemma}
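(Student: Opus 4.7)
\emph{Plan.} The plan is to rewrite the ODE componentwise, namely $x_0' = x_{K+1}' = x_K$, $x_2' = y + (\gamma_2 - x_0) x_2$, and $x_k' = x_{k-1} + (\gamma_k - x_0) x_k$ for $k = 3, \ldots, K$, and to systematically exploit the variational formula
\[
	x_k(t) = x_k(0) \, e^{\int_0^t (\gamma_k - x_0)} + \int_0^t e^{\int_s^t (\gamma_k - x_0)} s_k(s) \, \d s, \qquad k = 2, \ldots, K,
\]
where $s_2 = y$ and $s_k = x_{k-1}$ for $k \geq 3$. Non-negativity of every $x_k$ then follows by upward induction on $k$, and $x_0' = x_K \geq 0$ gives the monotonicity of $x_0$ and the existence of $x_0(\infty) \in [0,\infty]$. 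Part~(i) is proved by the same formula: either $a_2 > 0$ makes the leading term strictly positive, or $y(0) > 0$ combined with continuity of $y$ makes the integral strictly positive on $(0,\infty)$; strict positivity then propagates upward through the chain, and $x_0(t) > 0$ for $t > 0$ because $x_0(t) \geq \int_0^t x_K$.

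For part~(ii) I would proceed by backward induction on $k$ from $K$ down to $2$, establishing at each step that $x_0(\infty) > \gamma_k$ and that $\int_0^\infty x_{k-1} < +\infty$ (with the convention $x_1 := y$). The base $\int_0^\infty x_K < +\infty$ is immediate from $x_0' = x_K$ and $x_0$ bounded. For the first claim: if $x_0(\infty) \leq \gamma_k$ then $\gamma_k - x_0 \geq 0$ on $[0,\infty)$, so $x_k' \geq s_k \geq 0$ makes $x_k$ non-decreasing, and combined with $x_k(t_0) > 0$ for some $t_0 > 0$ this contradicts $\int_0^\infty x_k < +\infty$. For the second: since $x_0 \leq x_0(\infty)$ and $x_0(\infty) > \gamma_k$,
\[
	\int_s^\infty e^{\int_s^t (\gamma_k - x_0(u)) \, \d u} \, \d t \;\geq\; \int_s^\infty e^{-(x_0(\infty) - \gamma_k)(t-s)} \, \d t \;=\; \frac{1}{x_0(\infty) - \gamma_k}
\]
uniformly in $s \geq 0$, and Fubini applied to the variational formula yields $\int_0^\infty x_{k-1} \leq (x_0(\infty) - \gamma_k) \int_0^\infty x_k < +\infty$.

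For part~(iii) the strategy is first to show that $x_0$ is bounded by contradiction, and then to deduce $x_k \to 0$ via parts~(i)--(ii). If $x_0 \to \infty$, then choosing $M > \max_k \gamma_k + 1$ and taking $T$ with $x_0 \geq M$ on $[T,\infty)$, the variational formula (with $y = 0$) gives $x_2(t) \leq C_2 e^{-(t-T)}$, and an upward induction exploiting the convolution structure with an exponentially decaying kernel propagates polynomial-times-exponential decay to every $x_k$; in particular $\int_0^\infty x_K < +\infty$, so $x_0 = x_0(0) + \int_0^{\cdot} x_K$ remains bounded, a contradiction. Once $x_0$ is known bounded, part~(i) (applied with $a_2 > 0$) provides positivity, so part~(ii) gives $x_0(\infty) > \gamma_k$ and $\int_0^\infty x_k < +\infty$; splitting the convolution integral at $t/2$ and using the exponential kernel bound $e^{\int_s^t (\gamma_k - x_0)} \leq C e^{-c(t-s)}$ with $c = (x_0(\infty) - \gamma_k)/2$ then yields $x_k(t) \to 0$.

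The main obstacle I anticipate is the boundedness claim in part~(iii): in the SIR-like case $K = 2$ the first integral $V = x_0^2/2 - \gamma_2 x_0 + x_2$ is conserved along trajectories and directly bounds $x_0$, but no such simple Lyapunov function is apparent for general $K$, so the argument has to go through the indirect cascading-decay estimate described above.
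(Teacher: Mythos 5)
Your proposal is correct. For the preliminary claims and parts~\ref{ODE:>0}--\ref{ODE:boundedness} it follows essentially the paper's route: the same variation-of-constants representation (the paper's~\eqref{eq:integral-form}), upward induction for non-negativity and strict positivity, and backward induction from $k=K$ for integrability. The two small deviations in~\ref{ODE:boundedness} are both sound: you establish $x_0(\infty)>\gamma_k$ by a separate contradiction (if $x_0(\infty)\leq\gamma_k$ then $x_k'\geq 0$, so $x_k$ stays above $x_k(t_0)>0$ and cannot be integrable), where the paper reads it off the inequality $\int_0^\infty x_{k-1}\leq(x_0(\infty)-\gamma_k)\int_0^\infty x_k$; and you derive that inequality by Fubini rather than by integrating the ODE along a sequence $t_n$ with $x_k(t_n)\to 0$. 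Your version of the first step is in fact slightly more robust at $k=2$ when $y\equiv 0$. The genuine divergence is in part~\ref{ODE:to0}: the paper derives the closed-form iterated representation~\eqref{eq:iteration} with the recursively defined kernels $\phi_{k,i}$, bounds $\phi_{k,i}(t)\leq e^{i\eta^* t}$ to get the boundedness contradiction, and then runs a second induction on $i$ to show $\phi_{k,i}(t)e^{-\int_0^t(x_0-\gamma_k)}\to 0$. You avoid the explicit formula entirely: for boundedness you cascade polynomial-times-exponential decay through the convolution structure (the contribution of $s\in[0,T]$ is harmless since $\int_0^T x_{k-1}<\infty$), and for $x_k(t)\to 0$ you reuse $\int_0^\infty x_{k-1}<+\infty$ from part~\ref{ODE:boundedness} together with the uniform kernel bound $e^{\int_s^t(\gamma_k-x_0)}\leq Ce^{-c(t-s)}$ and a split of the convolution at $t/2$. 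This is a legitimate and arguably lighter alternative to the $\phi_{k,i}$ machinery; in a full write-up you would only need to make explicit the uniformity in $s$ of the kernel bound (absorbing the finite contribution of the initial interval into $C$) and the degree bookkeeping in the cascading estimate.
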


We now turn to the study of the SDE~\eqref{eq:SL-intermediate}. In the sequel we will call Feller diffusion with drift $\gamma \in \R$ the unique solution to the SDE
\begin{equation} \label{eq:SDE-Feller}
	Z(t) = Z(0) + \gamma \int_0^t Z(u) \d u + \int_0^t (2Z(u))^{1/2} \d B(u).
\end{equation}

It is well-known that there is a unique strong solution to~\eqref{eq:SDE-Feller}. If $Z$ is this solution, then it does not explode in finite time, $\P(T_0(Z) < +\infty) > 0$ and $\P(T_0(Z) < +\infty) = 1$ if $\gamma \leq 0$. Moreover, since $A_1$ satisfies
\[ A_1(t) = A_1(0) + \int_0^t b_1(\mv{A}(u)) \d u + \int_0^t (2A_1(u))^{1/2} \d B(u) \]
with $b_1(a) = (\gamma_1 - a_0) a_1 \leq \gamma_1 a_1$, Theorem~V.$43.1$ in Rogers and Williams~\cite{Rogers87:0} implies that $A_1(t) \leq Z(t)$ for all $t \geq 0$ almost surely, where $Z$ is Feller diffusion with drift $\gamma_1$ started at $Z(0) \geq A_1(0)$. This comparison argument will be used several times. The proof of the following result uses standard arguments, and we only sketch the proof.

\begin{lemma} \label{lemma:SDE-exact}
	Uniqueness in law holds for the SDE~\eqref{eq:SL-intermediate}.
\end{lemma}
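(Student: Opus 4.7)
The plan is to reduce uniqueness in law for the $(K{+}2)$-dimensional SDE~\eqref{eq:SL-intermediate} to uniqueness in law for the single coordinate $A_1$, and then to derive the latter via a Girsanov change of measure against Feller diffusion. First I would observe that for $k \neq 1$ the diffusion coefficient $\sigma_k$ vanishes, so that $\bar\pi \circ \mv{A} = (A_0, A_2, \ldots, A_{K+1})$ satisfies the ODE~\eqref{eq:ODE} with $y = A_1$. By Lemma~\ref{lemma:stab-ODE} this ODE admits a unique solution, hence $\bar\pi \circ \mv{A}$ is a deterministic and measurable functional $\Phi$ of the path of $A_1$. In particular, if two solutions $\mv{A}, \mv{A}'$ of~\eqref{eq:SL-intermediate} satisfy $A_1 \stackrel{d}{=} A_1'$, then $\mv{A} \stackrel{d}{=} \mv{A}'$, and it suffices to prove uniqueness in law for $A_1$.

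Next I would rewrite the one-dimensional SDE satisfied by $A_1$ in the form
\[ \d A_1(t) = \gamma_1 A_1(t) \d t + (2 A_1(t))^{1/2} \bigl( \d B(t) - h(t) \d t \bigr), \qquad h(t) := \Phi_0(A_1)(t) \bigl( A_1(t)/2 \bigr)^{1/2}, \]
where $\Phi_0(A_1)(t) = A_0(t)$ denotes the zero-th coordinate of $\Phi(A_1)$. This is exactly the form needed to apply Girsanov's theorem and compare with Feller diffusion~\eqref{eq:SDE-Feller} with drift $\gamma_1$, for which uniqueness in law is standard. To make this rigorous, fix $T > 0$ and for $M > 0$ set $\tau_M = \inf\{t \geq 0 : A_0(t) \vee A_1(t) \geq M\}$. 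On $[0, T \wedge \tau_M]$ the integrand $h$ is uniformly bounded, so Novikov's condition is trivially met and the exponential
\[ \Ecal_M = \exp\Bigl( -\int_0^{T \wedge \tau_M} h(u) \d B(u) - \tfrac{1}{2} \int_0^{T \wedge \tau_M} h(u)^2 \d u \Bigr) \]
is a true martingale of mean one. Under $\Ecal_M \d \P$, the process $A_1$ restricted to $[0, T \wedge \tau_M]$ has the law of a Feller diffusion with drift $\gamma_1$ starting from $a_1$ and stopped at the analogous hitting time; since this law is unique and $\Ecal_M$ is a deterministic functional of the path of $A_1$, the law of $A_1$ under $\P$ on $[0, T \wedge \tau_M]$ is also uniquely determined.

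It remains to let $M \to +\infty$. By the comparison argument recalled before the lemma, $A_1 \leq Z$ where $Z$ is Feller diffusion with drift $\gamma_1$ starting from $a_1$, and since $Z$ does not explode, $\sup_{[0,T]} A_1 < +\infty$ almost surely. A straightforward induction on $k$ combined with Gronwall's inequality applied to the ODE~\eqref{eq:ODE}, using that $(\gamma_k - A_0) A_k \leq |\gamma_k| A_k$, then yields $\sup_{[0,T]} A_k < +\infty$ for each $k = 2, \ldots, K$, and finally $A_0(T) = a_0 + \int_0^T A_K(u) \d u < +\infty$ almost surely. Hence $\tau_M > T$ almost surely for $M$ large enough, which gives uniqueness in law for $A_1$ on $[0, T]$; since $T$ is arbitrary, uniqueness holds on $[0, +\infty)$, and combined with the reduction step this yields uniqueness in law for $\mv{A}$.

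The main obstacle is the rigorous application of Girsanov's theorem in the presence of a path-dependent drift: one must check that the exponential martingale is a deterministic functional of $A_1$ alone (which is precisely where Lemma~\ref{lemma:stab-ODE} is used) and handle the potential unboundedness of $h$ via the localization by $\tau_M$. Everything else relies on routine estimates on the auxiliary ODE~\eqref{eq:ODE} and standard properties of Feller diffusion.
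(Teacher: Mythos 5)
Your proposal is correct and follows essentially the same route as the paper: a localization plus Girsanov change-of-drift argument reducing $A_1$ to Feller diffusion, combined with the observation that $\bar\pi\circ\mv{A}$ is determined from $A_1$ by the ODE~\eqref{eq:ODE} and a non-explosion argument via comparison with Feller diffusion and Lemma~\ref{lemma:stab-ODE}. The only slip is the sign in the Girsanov density: to make $B - \int_0^\cdot h$ a Brownian motion under the new measure you need $\exp\bigl( +\int_0^{T \wedge \tau_M} h(u) \, \d B(u) - \tfrac{1}{2} \int_0^{T \wedge \tau_M} h(u)^2 \, \d u \bigr)$, which does not affect the substance of the argument.
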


\begin{proof}
	The problem to invoke classical results is that some of the coefficients $b_k$ grow quadratically. However, standard localization and change of drift arguments (to go back to the case of Feller diffusion, since $\sigma(a) = \text{\textnormal{diag}}(0, (2 a_1)^{1/2}, 0, \ldots, 0)$) show that for each $N \geq 1$, the law of a solution $\mv{A}$ to~\eqref{eq:SL-intermediate} stopped at $\inf\{t \geq 0: \left\lVert \mv{A}(t) \right\rVert \geq N\}$ is uniquely determined (say, with $\lVert a \rVert = a_0 + \cdots + a_{K+1}$). By successively patching up these solutions, we obtain uniqueness to~\eqref{eq:SL-intermediate} until the time of explosion, and so it only remains to show that solutions to~\eqref{eq:SL-intermediate} do not explode. But $A_1$ cannot explode since it is dominated by Feller diffusion, and since $\bar \pi \circ \mv{A}$ satisfies~\eqref{eq:ODE}, no other coordinate can explode by Lemma~\ref{lemma:stab-ODE}.
\end{proof}

\begin{lemma} \label{lemma:properties-SDE}
	If $\mv{A}$ satisfies~\eqref{eq:SL-intermediate}, then $T_0(A_1)$ is almost surely finite.
\end{lemma}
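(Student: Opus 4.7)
The plan is to combine the Feller diffusion domination recalled just before Lemma~\ref{lemma:SDE-exact} ($A_1(t) \le Z(t)$ almost surely for all $t \ge 0$, with $Z$ Feller of drift $\gamma_1$ started at $A_1(0)$) with the strong Markov property. When $\gamma_1 \le 0$, $Z$ is (sub)critical Feller, for which $T_0(Z) < +\infty$ almost surely, and the domination immediately yields $T_0(A_1) \le T_0(Z) < +\infty$. I therefore focus on the case $\gamma_1 > 0$, assuming $A_1(0) > 0$ (else $T_0(A_1) = 0$ trivially).

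The first step is to show that on the event $E = \{T_0(A_1) = +\infty\}$, one has $\liminf_{t \to +\infty} A_1(t) = 0$ almost surely. On $E$, Lemma~\ref{lemma:properties-ODE}(i) applied to $\bar \pi \circ \mv A$ with $y = A_1$ (so $y(0) > 0$) gives $A_k(t) > 0$ for every $k = 0, 2, \ldots, K$ and every $t > 0$; in particular $A_0' = A_K > 0$ on $(0,+\infty)$, so $A_0$ is strictly increasing and $A_0(\infty) \in (A_0(0), +\infty]$ exists. If $A_0(\infty) = +\infty$, then $A_0(t) \ge \gamma_1 + 1$ from some (random) time $t_0$ on, and the comparison theorem V.43.1 in~\cite{Rogers87:0} applied to $A_1$ versus a subcritical Feller diffusion of drift $-1$ (on the same Brownian motion) forces $A_1$ to hit $0$ in finite time, contradicting $E$; this case has probability zero. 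Otherwise $A_0$ is bounded, Lemma~\ref{lemma:properties-ODE}(ii) gives $\int_0^\infty A_1(u)\,\d u < +\infty$, and the continuity and non-negativity of $A_1$ force $\liminf_{t\to +\infty} A_1(t) = 0$.

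Next, for $\delta \in (0, A_1(0))$ let $\tau_\delta = T^\downarrow_\delta(A_1)$. The previous step, together with $\tau_\delta \le T_0(A_1)$ on $E^c$, shows that $\tau_\delta < +\infty$ almost surely and $A_1(\tau_\delta) = \delta$ by continuity of $A_1$. By the strong Markov property and the comparison theorem (using $(\gamma_1 - A_0)A_1 \le \gamma_1 A_1$ since $A_0 \ge 0$), the shifted process $A_1(\tau_\delta + \cdot\,)$ is dominated by a Feller diffusion $Z'$ of drift $\gamma_1$ started at $\delta$; a standard scale function computation yields $\P(T_0(Z') < +\infty) = e^{-\gamma_1 \delta}$. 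Consequently
\[
    \P(T_0(A_1) < +\infty) \;\ge\; e^{-\gamma_1 \delta},
\]
and sending $\delta \downarrow 0$ gives $\P(T_0(A_1) < +\infty) = 1$.

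The delicate point is precisely the supercritical regime $\gamma_1 > 0$: here the direct Feller comparison only yields $\P(T_0(A_1) < +\infty) \ge e^{-\gamma_1 A_1(0)}$, which is strictly less than one, and the randomness of $A_0$ prevents making the drift of $A_1$ uniformly negative on the whole half-line. The strong Markov step at $\tau_\delta$ is what circumvents this obstacle, by letting the initial value of the dominating Feller diffusion be made as small as one wishes.
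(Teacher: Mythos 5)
Your argument is correct. Its first half is essentially the paper's own proof of \eqref{eq:goal-1}--\eqref{eq:goal-2}: you rule out the event where $A_0$ eventually exceeds $\gamma_1$ (in your case $\gamma_1+1$) by dominating $A_1$, after a hitting time of $A_0$, by a Feller diffusion with non-positive drift, and on the complementary event you invoke parts (i)--(ii) of Lemma~\ref{lemma:properties-ODE} to get $\int_0^\infty A_1 < +\infty$ and hence $\liminf_{t\to+\infty} A_1(t) = 0$. Where you genuinely depart from the paper is the conclusion. The paper introduces an infinite sequence of stopping times $\vartheta_i$ at which $A_1 \leq 1$, dominates $A_1$ after each by a supercritical Feller diffusion started at $1$, and concludes by iterating the strong Markov property that the probability that none of these copies dies out is at most $p^I$ for every $I$, where $p = \P(T_0(Z_1)=+\infty)<1$. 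You instead stop once, at $\tau_\delta = T^\downarrow_\delta(A_1)$, and use that the extinction probability $e^{-\gamma_1\delta}$ of a Feller diffusion with drift $\gamma_1>0$ started at $\delta$ tends to $1$ as $\delta\downarrow 0$. Both routes are valid; yours is shorter and avoids the induction over $I$, at the price of needing the quantitative behaviour of the extinction probability for small initial mass (though any lower bound tending to $1$ as $\delta\to 0$ would do, so the exact constant is not essential), whereas the paper only needs the qualitative fact that a supercritical Feller diffusion started at $1$ dies out with positive probability. One small point of rigour: the ``random time $t_0$'' in your second paragraph should be taken to be the stopping time $T^\uparrow_{\gamma_1+1}(A_0)$ so that the strong Markov property and the comparison theorem apply on the event where it is finite; this is exactly what the paper does with $T^\uparrow_{\gamma_1}(A_0)$.
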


Before proving this result, we first explain briefly how it yields Proposition~\ref{prop:A} in combination with Lemma~\ref{lemma:properties-ODE}.

\begin{proof} [Proof of Proposition~\ref{prop:A}]
	The fact that $T_0(A_1)$ is almost surely finite is precisely the content of Lemma~\ref{lemma:properties-SDE}. Then by shifting $\mv{A}$ at time $T_0(A_1)$, we see that $\theta_{T_0(A_1)} \circ \mv{A}$ satisfies the ODE~\eqref{eq:ODE} with $y = 0$, and so the results of Lemma~\ref{lemma:properties-ODE} are precisely those that we need to prove for Proposition~\ref{prop:A}.
\end{proof}

\begin{proof} [Proof of Lemma~\ref{lemma:properties-SDE}]
	To prove $\P(T_0(A_1) < +\infty) = 1$, it is enough to prove that
	\begin{equation} \label{eq:goal-1}
		\P\left( T_0(A_1) < +\infty, T^\uparrow_{\gamma_1}(A_0) < +\infty \right) = \P\left( T^\uparrow_{\gamma_1}(A_0) < +\infty \right)
	\end{equation}
	and
	\begin{equation} \label{eq:goal-2}
		\P\left( T_0(A_1) = +\infty, T^\uparrow_{\gamma_1}(A_0) = +\infty \right) = 0
	\end{equation}
	which we do now.
	\\
	
	In the event $\{T^\uparrow_{\gamma_1}(A_0) < +\infty\}$, we have $\d \mv{A}^\uparrow(t) = b(\mv{A}^\uparrow(t)) \d t + \sigma(\mv{A}^\uparrow(t)) \d B^\uparrow(t)$, where $\mv{A}^\uparrow$ and $B^\uparrow$ are the processes $\mv{A}$ and $B$ shifted at time $T^\uparrow_{\gamma_1}(A_0)$, so that, since $T^\uparrow_{\gamma_1}(A_0)$ is a stopping time, $B^\uparrow$ is a Brownian motion according to the strong Markov property. Moreover, we have by definition $b_1(\mv{A}^\uparrow(t)) = (\gamma_1 - A_0^\uparrow(t)) A_1^\uparrow(t) \leq 0$ and so $A_1^\uparrow$ is dominated by Feller diffusion without drift (see, e.g., Theorem~V.$43.1$ in Rogers and Williams~\cite{Rogers87:0}), which almost surely hits $0$ in finite time. This proves~\eqref{eq:goal-1}.
	\\
	
	We now prove~\eqref{eq:goal-2}. Assume that $T^\uparrow_{\gamma_1}(A_0) = +\infty$: then $A_0$ is bounded (by $\gamma_1$, and in particular $\gamma_1 > 0$) and according to~\ref{ODE:>0}+\ref{ODE:boundedness} of Lemma~\ref{lemma:properties-ODE}, we obtain that $\int_0^\infty A_1$ is finite. In particular $L = 0$, where we define $L = \liminf_{t \to \infty} A_1(t)$, and so
	\[ \P\left( T_0(A_1) = +\infty,  T^\uparrow_{\gamma_1}(A_0) = +\infty \right) \leq \P\left( T_0(A_1) = +\infty, L = 0 \right). \]
	
	Thus to prove~\eqref{eq:goal-2}, we only have to show that $\P(T_0(A_1) = +\infty, L = 0) = 0$. Let $\vartheta_i$ be defined recursively by $\vartheta_0 = 0$ and $\vartheta_{i+1} = \inf\{ t \geq 1 + \vartheta_i: A_1(t) \leq 1 \}$. Then in $\{ L = 0 \}$, $\vartheta_i$ is almost surely finite for every $i \geq 0$ and so we can define $Z_i$ as the solution to the SDE $\d Z_i = \gamma_1 Z_i \d t + (2 Z_i)^{1/2} \d B_i$ with initial condition $Z_i(0) = 1$, where $B_i$ is the process $B$ shifted at time $\vartheta_i$. Note that conditionally on $\{\vartheta_i < +\infty\}$, $Z_i$ is a Feller diffusion with drift $\gamma_1 > 0$ started at $1$, and that the strong Markov property and the comparison theorem~V.$43.1$ in Rogers and Williams~\cite{Rogers87:0} show that $A_1(t + \vartheta_i) \leq Z_i(t)$ for $t \geq 0$ and $i \geq 1$. In particular,
	\[ \P \left(T_0(A_1) = +\infty, L = 0 \right) \leq \P \left(\forall i \geq 1: \vartheta_i < +\infty \text{ and } T_0(Z_i) = +\infty \right) \]
	and so we only have to show that this last probability is equal to $0$. For $I \geq 1$ and in the event $\{\vartheta_I < +\infty\}$, the strong Markov property at time $\vartheta_I$ gives
	\begin{align*}
		\P \big(\vartheta_I < +\infty \text{ and } T_0(Z_i) = +\infty \text{ for } i = 1, \ldots, I \big) & \\
		& \hspace{-40mm} = \P \big( T_0(Z_1) = +\infty \big) \P \big(\vartheta_I < +\infty \text{ and } T_0(Z_i) = +\infty \text{ for } i = 1, \ldots, I-1 \big)\\
		& \hspace{-40mm} \leq \P \big( T_0(Z_1) = +\infty \big) \P \big(\vartheta_{I-1} < +\infty \text{ and } T_0(Z_i) = +\infty \text{ for } i = 1, \ldots, I-1 \big)
	\end{align*}
	and so we obtain by induction
	\[ \P \big(\vartheta_I < +\infty \text{ and } T_0(Z_i) = +\infty \text{ for } i = 1, \ldots, I \big) \leq \left[ \P \big( T_0(Z_1) = +\infty \big) \right]^I. \]
	 
	Since $\P \big( T_0(Z_1) = +\infty \big) < 1$, letting $I \to +\infty$ achieves the proof of~\eqref{eq:goal-2}.
\end{proof}

\section{Scaling limits} \label{sec:proof-scaling-limit}

We now prove the convergence results of Theorem~\ref{thm:SL-intermediate} and Propositions~\ref{prop:SL-large} and~\ref{prop:SL-small}. The proofs of these three results can be cast into the same framework by defining $\mv{A}_n$ as in~\eqref{eq:scaling-large} with:
\begin{description}
	\item[Intermediate initial condition (Theorem~\ref{thm:SL-intermediate})] $\alpha_{n,1} = n^{1/(K+2)}$, $\tau_n = \alpha_{n,1}$, $\alpha_{n,0} = \alpha_{n,K+1}$ and $\alpha_{n,k} = \alpha_{n,1}^k$ for $k = 2, \ldots, K+1$;
	\item[Large initial condition (Proposition~\ref{prop:SL-large})] $n^{1/(K+2)} \ll \alpha_{n,1} \ll n$ and $\tau_n$ and $\alpha_{n,k}$ for $k = 2, \ldots, K+1$ are as in~\eqref{eq:scaling-constants-large};
	\item[Small initial condition (Proposition~\ref{prop:SL-small})] $1 \ll \alpha_{n,1} \ll n^{1/(K+2)}$, $\tau_n = \alpha_{n,1}$, $\alpha_{n,0} = \alpha_{n,K+1}$ and $\alpha_{n,k} = \alpha_{n,1}^k$ for $k = 2, \ldots, K+1$;
\end{description}
and by assuming, with this notation, that $\tau_n (\varepsilon_{n,k} - \delta_{n,k}) \to \gamma_k$, which is consistent with the assumptions~\eqref{eq:gamma-intermediate},~\eqref{eq:gamma-large} and~\eqref{eq:gamma-small}.
\\

In order to prove Theorem~\ref{thm:SL-intermediate} and Propositions~\ref{prop:SL-large} and~\ref{prop:SL-small} with this notation, we have to prove that the sequence $(\mv{A}_n, n \geq 1)$ converges weakly toward: the solution of~\eqref{eq:SL-intermediate} in the intermediate regime; the solution of~\eqref{eq:SL-large} in the large regime; and the solution of~\eqref{eq:SL-small} in the small regime. In the sequel we will use the fact that in all three regimes, the algebraic relations $\alpha_{n,0} = \alpha_{n,K+1}$ and $\alpha_{n,k} = \tau_n \alpha_{n,k-1}$ for $k = 2, \ldots, K+1$ hold.

The proof relies on the standard machinery: we first prove tightness and then identify accumulation points. Both steps rely on semimartingale arguments based on the explicit form of the generator of $\mv{A}_n$. Indeed, $\mv{A}_n$ is by definition a Markov process with generator $\Omega_n$ given, for any function $f: \R^{K+2} \to \R$ and any $a \in \R^{K+2}$, by
\begin{multline} \label{eq:Omega}
	\Omega_n(f)(a) = \tau_n \sum_{k=1}^K \left[ f\left(a-\frac{e_k}{\alpha_{n,k}} + \frac{e_{k+1}}{\alpha_{n,k+1}}\right) - f(a) \right] (1+\delta_{n,k}) \alpha_{n,k} a_k\\
	+ \tau_n \left( 1 - \alpha_{n,0} a_0 / n \right) \sum_{k=1}^K \left[ f\left(a+\frac{e_0}{\alpha_{n,0}}+\frac{e_k}{\alpha_{n,k}}\right) - f(a) \right] (1+\varepsilon_{n,k}) \alpha_{n,k} a_k
\end{multline}
(note that $a_{n,0}/n = 1 - \alpha_{n,0} A_{n,0} / n$, which gives the factor $1 - \alpha_{n,0} a_0 / n$ in the above expression). Since $\mv{A}_n$ lives on a finite state space, for any function $f$ the process
\[ M_n^f(t) = f(\mv{A}_n(t)) - f(\mv{A}_n(0)) - V_n^f(t) \ \text{ with } \ V_n^f(t) = \int_0^t \Omega_n(f)(\mv{A}_n(u)) \d u \]
is a martingale whose quadratic variation process $\langle M_n^f \rangle$ is equal to
\[ \langle M_n^f \rangle(t) = \int_0^t \Bcal_n(f)(\mv{A}_n(u)) \d u \ \text{ with } \ \Bcal_n(f) = \Omega_n(f^2) - 2 f \Omega_n(f), \]
see for instance Lemma~VII.$3.68$ in Jacod and Shiryaev~\cite{Jacod03:0}. Recall that $\delta_{n,0} = \delta_{n,K+1} = \varepsilon_{n,K+1} = -1$, and let in the rest of this section $\pi_k$ for $k = 0, \ldots, K+1$ be the projection on the $k$th coordinate, i.e., $\pi_k(a) = a_k$ for any $a = (a_0, \ldots, a_{K+1}) \in \R^{K+2}$.

\begin{lemma} \label{lemma:expression-V-M}
	For each $n \geq 1$ and $t \geq 0$,
	\begin{equation} \label{eq:V_0}
		V_n^{\pi_0}(t) = \frac{\tau_n}{\alpha_{n,0}} \sum_{k=1}^K (1+\varepsilon_{n,k}) \alpha_{n,k} \int_0^t (1 - \alpha_{n,0} A_{n,0}(u) / n) A_{n,k}(u) \d u,
	\end{equation}
	\begin{multline} \label{eq:V_k}
		V_n^{\pi_k}(t) = \int_0^t \Big[ (1+\delta_{n,k-1}) A_{n,k-1}(u) + \tau_n (\varepsilon_{n,k} - \delta_{n,k}) A_{n,k}(u)\\
		\left. - \frac{\tau_n \alpha_{n,0}}{n} (1+\varepsilon_{n,k}) A_{n,0}(u) A_{n,k}(u) \right] \d u
	\end{multline}
	for $k = 1, \ldots, K+1$ and
	\begin{equation} \label{eq:M}
		\langle M_n^{\pi_k} \rangle(t) = \frac{1}{\alpha_{n,k}} V_n^{\pi_k}(t) + 2 \frac{\tau_n}{\alpha_{n,k}} (1+\delta_{n,k}) \int_0^t A_{n,k}(u) \d u
	\end{equation}
	for $k = 0, \ldots, K+1$.
\end{lemma}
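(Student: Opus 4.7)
The plan is a direct computation from the explicit form of the generator \eqref{eq:Omega}. Since $\mv{A}_n$ lives on a finite state space, all the semimartingale identities invoked in the statement follow from \cite{Jacod03:0} as soon as one has computed $\Omega_n(\pi_k)$ and the carré du champ $\Bcal_n(\pi_k)(a) = \sum_j r_j(a)(\Delta_j \pi_k)^2$, where the sum runs over the finitely many transitions out of $a$, $r_j(a)$ is the rate of transition $j$, and $\Delta_j\pi_k$ is the resulting change in $\pi_k$. The key observation that makes the computation trivial is that each scaled jump in $\pi_k$ equals either $0$ or $\pm 1/\alpha_{n,k}$, so that $(\Delta_j\pi_k)^2 \in \{0, 1/\alpha_{n,k}^2\}$ is essentially the indicator that $\pi_k$ changes.

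First I would compute $\Omega_n(\pi_k)$ coordinate-by-coordinate. For $\pi_0$ only infection transitions contribute, each by $+1/\alpha_{n,0}$, immediately yielding \eqref{eq:V_0} after integration. For $\pi_k$ with $1 \le k \le K+1$, the relevant transitions are the progression $k{-}1 \to k$ ($+1/\alpha_{n,k}$), the progression $k \to k{+}1$ ($-1/\alpha_{n,k}$), and (when $k \le K$) the infection at stage $k$ ($+1/\alpha_{n,k}$). Using the algebraic relation $\alpha_{n,k} = \tau_n \alpha_{n,k-1}$ for $k \ge 2$ collapses the prefactor on the first term to $(1+\delta_{n,k-1})a_{k-1}$, and the conventions $\delta_{n,0} = \delta_{n,K+1} = \varepsilon_{n,K+1} = -1$ cleanly zero out the missing terms at the two boundary cases $k=1$ (no progression in) and $k=K{+}1$ (no progression out, no infection). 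This gives formula \eqref{eq:V_k} after integrating.

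For the quadratic variation I would decompose
\[ \Bcal_n(\pi_k)(a) = \frac{1}{\alpha_{n,k}^2} \Bigl( \sum_{\Delta_j\pi_k > 0} r_j(a) + \sum_{\Delta_j\pi_k < 0} r_j(a) \Bigr), \]
and compare with $\Omega_n(\pi_k)(a)/\alpha_{n,k} = \alpha_{n,k}^{-2}\bigl(\sum_{\Delta_j\pi_k > 0} r_j - \sum_{\Delta_j\pi_k < 0} r_j\bigr)$. Adding these two expressions yields
\[ \Bcal_n(\pi_k)(a) = \frac{1}{\alpha_{n,k}} \Omega_n(\pi_k)(a) + \frac{2}{\alpha_{n,k}^2} \sum_{\Delta_j\pi_k < 0} r_j(a). \]
The only downward jump in $\pi_k$ comes from the progression $k \to k{+}1$, whose rate under $\Omega_n$ is $\tau_n(1+\delta_{n,k})\alpha_{n,k} a_k$, so the residual term equals $2(\tau_n/\alpha_{n,k})(1+\delta_{n,k}) a_k$. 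Integrating and noting that the convention $\delta_{n,0} = \delta_{n,K+1} = -1$ correctly kills this term at $k=0$ and $k=K{+}1$ gives \eqref{eq:M}, valid uniformly in $k \in \{0,\ldots,K+1\}$.

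No real obstacle arises: the proof is a mechanical unwinding of the generator. The only thing to be careful about is the boundary bookkeeping at $k \in \{0, 1, K{+}1\}$, where the intended uniformity of \eqref{eq:V_k}--\eqref{eq:M} relies entirely on the conventions $\delta_{n,0} = \delta_{n,K+1} = \varepsilon_{n,K+1} = -1$, and on the relation $\alpha_{n,k} = \tau_n\alpha_{n,k-1}$ which collapses the rate of the incoming progression onto the natural prefactor $(1+\delta_{n,k-1}) A_{n,k-1}$.
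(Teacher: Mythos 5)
Your proposal is correct and follows essentially the same route as the paper: compute $\Omega_n(\pi_k)$ transition by transition, use the conventions $\delta_{n,0}=\delta_{n,K+1}=\varepsilon_{n,K+1}=-1$ and the relation $\alpha_{n,k}=\tau_n\alpha_{n,k-1}$ to unify the boundary cases, and express $\Bcal_n(\pi_k)=\Omega_n(\pi_k^2)-2\pi_k\Omega_n(\pi_k)$ as the sum of rates times squared jumps. Your positive/negative-jump bookkeeping for the quadratic variation is just a tidier way of writing the same direct expansion the paper performs.
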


\begin{proof}
	Recall that $V_n^{\pi_k}(t) = \int_0^t \Omega_n(\pi_k)(\mv{A}_n(u)) \d u$ and that $\langle M_n^{\pi_k} \rangle(t) = \int_0^t \Bcal_n(\pi_k)(\mv{A}_n(u)) \d u$, so that we only have to compute $\Omega_n(\pi_k)$ and $\Bcal_n(\pi_k)$ for $k=0, \ldots, K+1$. By writing $\Omega_n(f^2)(a) - 2 f(a) \Omega_n(f)(a)$ in the form
	\[ \sum_j (x_j^2 - y^2) \beta_j - 2 y \sum_j(x_j - y) \beta_j = \sum_j \left( (x_j^2 - y^2) - 2 y (x_j - y) \right) \beta_j = \sum_j (x_j - y)^2 \beta_j \]
	for some $x_j$, $\beta_j$ and $y$, we see that $\Bcal_n(f)$ can alternatively be written as follows:
	\begin{multline} \label{eq:bcal}
		\Bcal_n(f)(a) = \tau_n \sum_{k=1}^K \left[ f\left(a-\frac{e_k}{\alpha_{n,k}} + \frac{e_{k+1}}{\alpha_{n,k+1}}\right) - f(a) \right]^2 (1+\delta_{n,k}) \alpha_{n,k} a_k\\
		+ \tau_n \left( 1 - \alpha_{n,0} a_0/ n \right) \sum_{k=1}^K \left[ f\left(a+\frac{e_0}{\alpha_{n,0}}+\frac{e_k}{\alpha_{n,k}}\right) - f(a) \right]^2 (1+\varepsilon_{n,k}) \alpha_{n,k} a_k.
	\end{multline}
	
	For $k = 0$,
	\[ \Omega_n(\pi_0)(a) = \frac{\tau_n}{\alpha_{n,0}}(1 - \alpha_{n,0} a_0/n) \sum_{k=1}^K (1+\varepsilon_{n,k}) \alpha_{n,k} a_k \ \text{ and } \ \Bcal_n(\pi_0)(a) = \frac{1}{\alpha_{n,0}} \Omega_n(\pi_0)(a) \]
	which proves the result for $k = 0$. For $k = K+1$,
	\[ \Omega_n(\pi_{K+1})(a) = \frac{\tau_n}{\alpha_{n,K+1}} (1+\delta_{n,K}) \alpha_{n,K} a_K = (1+\delta_{n,K}) a_K \]
	using $\alpha_{n,K+1} = \tau_n \alpha_{n,K}$ to get the last equality, and $\Bcal_n(\pi_{K+1}) = \Omega_n(\pi_{K+1}) / \alpha_{n,K+1}$, which proves the result for $k = K+1$. Consider now $k =1, \ldots, K$:
	\begin{align*}
		\Omega_n(\pi_k)(a) & = \tau_n \left( -\frac{1}{\alpha_{n,k}} (1+\delta_{n,k}) \alpha_{n,k} a_k + \frac{1}{\alpha_{n,k}} (1+\delta_{n,k-1}) \alpha_{n,k-1} a_{k-1} \right)\\
		& \hspace{55mm} + \frac{\tau_n}{\alpha_{n,k}} (1-\alpha_{n,0} a_0 / n) (1+\varepsilon_{n,k}) \alpha_{n,k} a_k\\
		& = (1+\delta_{n,k-1}) a_{k-1} + \tau_n (\varepsilon_{n,k} - \delta_{n,k}) a_k - \frac{\tau_n \alpha_{n,0}}{n} (1+\varepsilon_{n,k}) a_0 a_k,
	\end{align*}
	using $\delta_{n,0} = -1$ for $k = 1$, and $\alpha_{n,k-1} \tau_n = \alpha_{n,k}$ for $k \geq 2$. This proves the result for $V_n^{\pi_k}$, while writing
	\begin{align*}
		\Bcal_n(\pi_k)(a) & = \tau_n \left( \frac{1}{\alpha_{n,k}^2} (1+\delta_{n,k}) \alpha_{n,k} a_k + \frac{1}{\alpha_{n,k}^2} (1+\delta_{n,k-1}) \alpha_{n,k-1} a_{k-1} \right)\\
		& \hspace{55mm} + \frac{\tau_n}{\alpha_{n,k}^2} (1-\alpha_{n,0} a_0 / n) (1+\varepsilon_{n,k}) \alpha_{n,k} a_k\\
		& = \frac{1}{\alpha_{n,k}} \Omega_n(\pi_k)(a) + 2 \frac{\tau_n}{\alpha_{n,k}} (1 + \delta_{n,k}) a_k
	\end{align*}
	proves the result for $M_n^{\pi_k}$, which concludes the proof.
\end{proof}

\subsection{Tightness} We now prove that the sequence $(\mv{A}_n, n \geq 1)$ is tight. Since $\mv{A}_n$ makes jumps of vanishing size (at most $1 / \alpha_{n,1}$), in order to show that $(\mv{A}_n, n \geq 1)$ is tight it is sufficient to show that for every $T \geq 0$,
\begin{equation} \label{eq:condition-tightness}
	\lim_{\eta \to 0} \limsup_{n \to +\infty} \Delta_n(\eta) = 0 \ \text{ where } \ \Delta_n(\eta) = \sup_\Upsilon \sup_{0 \leq t \leq \eta} \E \left( \sum_{k = 0}^{K+1} \norm{A_{n,k}(\Upsilon + t) - A_{n,k}(\Upsilon)}  \right)
\end{equation}
and where the first supremum in the definition of $\Delta_n(\eta)$ is taken over all the random variables $\Upsilon \leq T$ that are stopping times relatively to the filtration generated by $\mv{A}_n$ (see Corollary on page~$179$ in Billingsley~\cite{Billingsley99:0}). Fix in the rest of the proof some $T \geq 0$. Because of the strong Markov property and the fact that we consider stopping times $\Upsilon \leq T$, we have
\begin{equation} \label{eq:tightness-2}
	\Delta_n(\eta) \leq \sup_{0 \leq t \leq \eta} \E \left[ \sup_{0 \leq y \leq T} \Phi_n^t(\mv{A}_n(y)) \right]
\end{equation}
where
\begin{equation} \label{eq:Phi}
	\Phi_n^t(a) = \sum_{k = 0}^{K+1} \E_a \left( \norm{A_{n,k}(t) - A_{n,k}(0)} \right)
\end{equation}
and the subscript refers to the initial state of the process $\mv{A}_n$ (when there is no subscript, this refers to an initial condition as in the statement of the theorem or the propositions). By definition we have $\E_a(\lvert A_{n,k}(t) - A_{n,k}(0) \rvert) = \E_a(\lvert M_n^{\pi_k}(t) + V_n^{\pi_k}(t)\rvert)$, and so combining the triangular inequality with Cauchy-Schwarz inequality and summing over $k = 0, \ldots, K+1$ leads to
\begin{equation} \label{eq:CS}
	\Phi_n^t(a) \leq \sum_{k=0}^{K+1} \sqrt{ \E_a \left( \langle M_n^{\pi_k} \rangle(t) \right)} + \sum_{k=0}^{K+1} \E_a \left( \norm{V_n^{\pi_k}(t)} \right).
\end{equation}

In order to do control the first moment of $\langle M_n^{\pi_k} \rangle(t)$ and of $V_n^{\pi_k}(t)$, we introduce the functions $\psi = \pi_0 + \cdots + \pi_K$, i.e., $\psi(a) = a_0 + \cdots + a_K$ for $a = (a_0, \ldots, a_{K+1}) \in \R^{K+2}$, and $\Psi = \psi + \psi^2$. Since $\alpha_{n,1} \to +\infty$ as $n \to +\infty$, $\alpha_{n,k+1} = \tau_n \alpha_{n,k}$ for $k = 1, \ldots, K$ and $\tau_n \to +\infty$, we will assume for convenience that $1 \leq \alpha_{n, 1} \leq \cdots \leq \alpha_{n,K+1}$. Moreover, it can be checked that the constant
\[ C_0 = \sup_{n \geq 1, 1 \leq k \leq K} \left( \tau_n \lvert \varepsilon_{n,k} - \delta_{n,k} \rvert, 1 + \lvert \delta_{n,k} \rvert, 1 + \lvert \varepsilon_{n,k} \rvert, \frac{\tau_n \alpha_{n,0}}{n}, \frac{2 \tau_n}{\alpha_{n,k}}, \frac{\tau_n \alpha_{n,k}}{\alpha_{n,0}} \right) \]
is finite. From these various definitions and~\eqref{eq:V_0}--\eqref{eq:M}, and also recalling that $\delta_{n,0} = \delta_{n,K+1} = \varepsilon_{n,K+1} = -1$, it follows that
\begin{equation} \label{eq:bound-increments}
	\max \left( \norm{V_n^{\pi_k}(t)}, \ \langle M_n^{\pi_k} \rangle(t) \right) \leq 2 C_0^2 \int_0^t \Psi(\mv{A}_n(u)) \d u, \ k = 0, \ldots, K+1, t \geq 0.
\end{equation}

Combined with~\eqref{eq:CS} this leads to
\[ \Phi_n^t(a) \leq C_1 \sqrt{\int_0^t \E_a \left[ \Psi(\mv{A}_n(u)) \right] \d u} + C_1 \int_0^t \E_a \left[ \Psi(\mv{A}_n(u)) \right] \d u \]
with $C_1 = 2 C_0^2 (K+2)$. For $t \leq \eta \leq 1$ we obtain
\begin{equation} \label{eq:interm-1}
	\Phi_n^t(a) \leq C_1 \eta^{1/2} \left( \sup_{0 \leq u \leq 1} \sqrt{\E_a \left[ \Psi(\mv{A}_n(u)) \right]} + \sup_{0 \leq u \leq 1} \E_a \left[ \Psi(\mv{A}_n(u)) \right] \right).
\end{equation}

\begin{lemma}\label{lemma:bound-psi}
	There exists a finite constant $C_2$ such that for every initial state $a$,
	\begin{equation} \label{eq:bound-psi}
		\sup_{0 \leq u \leq 1} \E_a \left[ \psi^i(\mv{A}_n(u)) \right] \leq C_2 \sum_{j=0}^i \psi^j(a), \ i = 1,2.
	\end{equation}
	
	In particular,
	\begin{equation} \label{eq:bound-Psi}
		\sup_{n \geq 1} \ \E \left[ \sup_{0 \leq y \leq T} \Psi(\mv{A}_n(y)) \right] < +\infty.
	\end{equation}
\end{lemma}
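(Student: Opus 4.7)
The plan is to establish the two statements in succession, the second building on the first via a higher-moment estimate combined with the Doob--Meyer decomposition. The computational heart of the argument is a pointwise bound $\Omega_n(\psi)(a) \leq C_3\,\psi(a)$, uniform in $n$ and in states $a$ of the scaled chain. Expanding $\Omega_n(\psi)$ by linearity and using Lemma~\ref{lemma:expression-V-M} shows that the coefficient of each $a_k$ (after invoking $\alpha_{n,k+1} = \tau_n\alpha_{n,k}$) splits into three contributions, each uniformly bounded: the scaled drift $\tau_n(\varepsilon_{n,k}-\delta_{n,k})$, a term of the form $p(1+\varepsilon_{n,k})(\tau_n\alpha_{n,k}/\alpha_{n,0}) \leq C_0^2$ with $p := 1-\alpha_{n,0}a_0/n \in [0,1]$, and the automatically nonpositive quantity $-\tau_n(1-p)(1+\varepsilon_{n,k})$. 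An analogous calculation based on~\eqref{eq:bcal}, using that each squared jump of $\psi$ is $O(1/\alpha_{n,1}^2)$ while the transition rates are $O(\tau_n\alpha_{n,k})$ and $\tau_n/\alpha_{n,k} \leq C_0/2$, yields $\Bcal_n(\psi)(a) \leq C_4\,\psi(a)$.

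With these pointwise bounds in hand, \eqref{eq:bound-psi} for $i=1$ follows immediately from Dynkin's formula and Gronwall's lemma applied to $t \mapsto \E_a[\psi(\mv{A}_n(t))]$. For $i=2$, the identity $\Omega_n(\psi^2) = \Bcal_n(\psi) + 2\psi\,\Omega_n(\psi) \leq C_4\psi + 2C_3\psi^2$ reduces the estimate for $\E_a[\psi^2(\mv{A}_n(t))]$ to a linear Gronwall inequality once the $i=1$ bound is plugged in to control the cross-term.

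To establish \eqref{eq:bound-Psi} I first extend the scheme to $i=3$: the binomial expansion $\psi^3(a+\Delta) - \psi^3(a) = 3\psi^2\delta + 3\psi\delta^2 + \delta^3$, combined with the bounds on $\Omega_n(\psi)$ and $\Bcal_n(\psi)$ and the jump-size estimate $|\delta| \leq 2/\alpha_{n,1}$, gives $\Omega_n(\psi^3)(a) \leq C(1+\psi^3(a))$, and Gronwall then yields $\sup_n \sup_{u \leq T}\E[\psi^3(\mv{A}_n(u))] < \infty$ because $\psi(\mv{A}_n(0))$ is bounded under the assumption $\mv{A}_n(0) \to a$. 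Writing the Doob--Meyer decomposition $\Psi(\mv{A}_n(t)) = \Psi(\mv{A}_n(0)) + M^\Psi_n(t) + V^\Psi_n(t)$, the bound $\Omega_n(\Psi) \leq C\,\Psi$ (which follows by adding the bounds on $\Omega_n(\psi)$ and $\Omega_n(\psi^2)$) gives $\sup_{y \leq T}V^\Psi_n(y) \leq \int_0^T \bigl(\Omega_n(\Psi)\bigr)^+(\mv{A}_n(u))\,\d u \leq C\int_0^T \Psi(\mv{A}_n(u))\,\d u$. Doob's $L^2$ maximal inequality controls $\E[\sup_{y \leq T}|M^\Psi_n(y)|]$ by $2(\E[\langle M^\Psi_n\rangle(T)])^{1/2}$, and writing $\Psi(a+\Delta) - \Psi(a) = \delta(1+2\psi+\delta)$ yields $\Bcal_n(\Psi)(a) \leq C(1+\psi^3(a))$, whose expectation is uniformly bounded by the $i=3$ estimate. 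Taking expectations in the supremum bound and applying Gronwall one last time closes $\sup_n \E[\sup_{y \leq T}\Psi(\mv{A}_n(y))] < \infty$.

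The main difficulty is the very first bound $\Omega_n(\psi) \leq C_3\,\psi$: a naive expansion produces coefficients of order $\tau_n \to \infty$, and it is only the conjunction of the near-critical scaling $\tau_n(\varepsilon_{n,k}-\delta_{n,k}) \to \gamma_k$ (which keeps the bare drift bounded) with the automatic nonpositivity of the depletion contribution $-\tau_n(1-p)(1+\varepsilon_{n,k})$ (which would otherwise cost $\tau_n a_0 a_k$ worth of growth if absolute values were taken) that produces a clean linear upper bound. Once this is secured, the remainder is standard Gronwall/Doob machinery, and the cascade up to $i=3$ is driven purely by the smallness $|\delta| = O(1/\alpha_{n,1})$ of the scaled jumps.
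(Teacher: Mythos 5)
Your proposal is correct, and its overall architecture is the same as the paper's: a pointwise generator bound of the form $\Omega_n(\psi^i) \leq C\sum_j \psi^j$ that exploits the nonpositivity of the depletion term together with the near-critical normalization $\tau_n(\varepsilon_{n,k}-\delta_{n,k})\to\gamma_k$, then Gronwall for the fixed-time moments, then a Doob-type maximal inequality for the running supremum. The differences are in the bookkeeping, and both are sound. First, where the paper computes $\Omega_n(\psi^i)$ directly from an integral representation of $x^i-y^i$ in the jumps (leading to \eqref{eq:bound-Omega(psi^i)}), you bootstrap via the identity $\Omega_n(\psi^2)=\Bcal_n(\psi)+2\psi\,\Omega_n(\psi)$ and a binomial expansion for $\psi^3$; this is equivalent but arguably tidier, since $\Bcal_n(\psi)$ has already been shown to be of order $\psi$. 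Second, for the supremum bound the paper applies Doob to $M_n^{\psi^2}$ after the crude step $\E(X)\leq 1+\E(X^2)$, which forces it to control $\Bcal_n(\psi^2)$ by a quartic in $\psi$ (via $\Omega'_n(\psi^4)$) and hence to extend the moment bounds to $i=4$ in \eqref{eq:bound-psi'}; you instead use $\E[\sup|M|]\leq 2\,\E[\langle M\rangle(T)]^{1/2}$ and compute $\Bcal_n(\Psi)$ directly, where the cancellation in $(\Psi(a+\Delta)-\Psi(a))^2=\delta^2(1+2\psi+\delta)^2$ yields a cubic bound, so third moments suffice. This is a genuine (if small) economy over the paper's argument. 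One cosmetic remark: in your decomposition of the coefficient of $a_k$ in $\Omega_n(\psi)$ there is also the bounded contribution $\indicator{k\neq K}(1+\delta_{n,k})$ coming from the arrival into stage $k+1$, which you do not list explicitly but which causes no harm.
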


Let us quickly finish the proof of the tightness of $(\mv{A}_n)$ based on this lemma: first, plugging in~\eqref{eq:bound-psi} into~\eqref{eq:interm-1}, we obtain the existence of a finite constant $C_3$ such that
\[ \Phi_n^t(a) \leq C_3 \eta^{1/2} \left( \sqrt{\Psi(a)} + \Psi(a) \right), \ 0 \leq t \leq \eta \leq 1, \]
and so in view of~\eqref{eq:tightness-2}, we see that for every $\eta \leq 1$,
\[ \Delta_n(\eta) \leq C_3 \eta^{1/2} \E \left[ \sup_{0 \leq y \leq T} \left( \sqrt{\Psi(\mv{A}_n(y))} + \Psi(\mv{A}_n(y)) \right) \right]. \]

Thus,~\eqref{eq:bound-Psi} implies the existence of a finite constant $C_4$ such that $\Delta_n(\eta) \leq C_4 \eta^{1/2}$ which achieves to prove that $(\mv{A}_n)$ is tight. It remains to prove Lemma~\ref{lemma:bound-psi}.

\begin{proof} [Proof of Lemma~\ref{lemma:bound-psi}]
	Let us first prove~\eqref{eq:bound-psi}. Defining $\eta_{n,k} = 1/\alpha_{n,k} - \indicator{k \neq K}/\alpha_{n,k+1} \geq 0$ and writing
	\[ \psi^i(a) - \psi^i \left( a - \frac{e_k}{\alpha_{n,k}} + \frac{e_{k+1}}{\alpha_{n,k+1}} \right) = \left( \psi(a) \right)^i - \left( \psi(a) - \eta_{n,k} \right)^i = i \int_{\psi(a) - \eta_{n,k}}^{\psi(a)} x^{i-1} \d x, \]
	we get
	\[ \psi^i(a) - \psi^i \left( a - \frac{e_k}{\alpha_{n,k}} + \frac{e_{k+1}}{\alpha_{n,k+1}} \right) \geq i \eta_{n,k} \left( \psi(a) - \eta_{n,k} \right)^{i-1}. \]

	Defining $\mu_{n,k} = 1/\alpha_{n,0} + 1/\alpha_{n,k}$ similarly leads to
	\[ \psi^i \left( a + \frac{e_0}{\alpha_{n,0}} + \frac{e_k}{\alpha_{n,k}} \right) - \psi^i(a) \leq i \mu_{n,k} \left( \psi(a) + \mu_{n,k} \right)^{i-1}. \]
	
	Plugging these inequalities in the definition~\eqref{eq:Omega} of $\Omega_n$, we obtain
	\begin{multline*}
		\Omega_n(\psi^i)(a) \leq i \tau_n (1-\alpha_{n,0} a_0 / n) \sum_{k=1}^K \mu_{n,k} \left( \psi(a) + \mu_{n,k} \right)^{i-1} (1+\varepsilon_{n,k}) \alpha_{n,k} a_k\\
		- i \tau_n \sum_{k=1}^K \eta_{n,k} \left( \psi(a) - \eta_{n,k} \right)^{i-1} (1+\delta_{n,k}) \alpha_{n,k} a_k.
	\end{multline*}
	
	Using $1 - \alpha_{n,0} a_0 / n \leq 1$, expanding the terms raised to the power $i{-}1$ and changing the order of summation, we end up with
	\begin{multline*}
		\Omega_n(\psi^i)(a) \leq i \sum_{j=0}^{i-1} \left( \begin{array}{c} i-1 \\ j \end{array} \right) \left(\psi(a)\right)^{i-1-j}\\
		\times \left\{ \tau_n \sum_{k=1}^K \mu_{n,k}^{j+1} (1+\varepsilon_{n,k}) \alpha_{n,k} a_k - (-1)^j \tau_n \sum_{k=1}^K \eta_{n,k}^{j+1} (1+\delta_{n,k}) \alpha_{n,k} a_k \right\}.
	\end{multline*}
	
	Plugging in the definitions of $\eta_{n,k}$ and $\mu_{n,k}$, we see that for $j = 0$ the term between brackets in the previous display is given by
	\begin{multline*}
		\tau_n \sum_{k=1}^K (1+\varepsilon_{n,k}) (1 + \alpha_{n,k} / \alpha_{n, 0}) a_k - \tau_n \sum_{k=1}^K (1+\delta_{n,k}) (1 - \indicator{k \not = K} \alpha_{n,k} / \alpha_{n, k+1}) \alpha_{n,k} a_k\\
		= \tau_n \sum_{k=1}^K (\varepsilon_{n,k} - \delta_{n,k}) a_k + \sum_{k=1}^K \frac{\tau_n \alpha_{n,k}}{\alpha_{n,0}} (1 + \varepsilon_{n,k}) a_k + \sum_{k=1}^{K-1} (1 + \delta_{n,k}) a_k \leq 3 C_0^2 \psi(a).
	\end{multline*}

	For $j \geq 1$, we obtain similarly, using also $\mu_{n,k} \leq 2 / \alpha_{n,k} \leq 2$ and $\eta_{n,k} \leq 1 / \alpha_{n,k} \leq 1$, that the term between brackets is upper bounded by
	\[ \tau_n \sum_{k=1}^K \frac{4}{\alpha_{n,k}^2} (1+\varepsilon_{n,k}) \alpha_{n,k} a_k + \tau_n \sum_{k=1}^K \frac{1}{\alpha_{n,k}^2} (1+\delta_{n,k}) \alpha_{n,k} a_k \leq 3 C_0^2 \psi(a). \]
	
	We thus obtain, for some finite constant $C(i)$ and every $a$,
	\begin{equation} \label{eq:bound-Omega(psi^i)}
		\Omega_n(\psi^i)(a) \leq C(i) \sum_{j=1}^i \left(\psi(a)\right)^j.
	\end{equation}
		
	In particular,
	\begin{align*}
		\E_a \left( \psi^i(\mv{A}_n(t)) \right) & = \psi^i(\mv{A}_n(0)) + \int_0^t \E_a \left( \Omega_n (\psi^i) (\mv{A}_n(u)) \right) \d u\\
		& \leq C(i) \int_0^t \E_a \left(\sum_{j=0}^{i-1}\psi^j(\mv{A}_n(u))\right) \d u + C(i) \int_0^t \E_a \left(\psi^i(\mv{A}_n(u))\right) \d u
	\end{align*}
	assuming without loss of generality that $C(i) \geq 1$ for the last inequality. Thus Gronwall's lemma implies
	\[ \E_a \left( \psi^i(\mv{A}_n(t)) \right) \leq C(i) \int_0^t \sum_{j=0}^{i-1} \E_a \left( \psi^j(\mv{A}_n(u)) \right) \d u \times e^{C(i)t}. \]
	
	Then~\eqref{eq:bound-psi} follows from this inequality by induction on $i$. We now derive~\eqref{eq:bound-Psi}: since $\Psi = \psi + \psi^2$ it is enough to prove~\eqref{eq:bound-Psi} with $\psi^2$ in place of $\Psi$. First of all, note that the previous reasoning shows the existence of a finite constant $C'_2$ such that
	\begin{equation} \label{eq:bound-psi'}
		\sup_{0 \leq u \leq T} \E \left[ \psi^i(\mv{A}_n(u)) \right] \leq C_2' \sum_{j=0}^i \psi^j(\mv{A}_n(0)), \ i = 1,2,3,4.
	\end{equation}
	
	By definition, $\psi^2 \circ \mv{A}_n = \psi^2(\mv{A}_n(0)) + V_n + M_n$, defining $V_n = V_n^{\psi^2}$ and $M_n = M_n^{\psi^2}$, and so
	\[ \E \left[ \sup_{0 \leq y \leq T} \psi^2(\mv{A}_n(y)) \right] \leq \psi^2(\mv{A}_n(0)) + \E \left[ \sup_{0 \leq y \leq T} V_n(y) \right] + \E \left[ \sup_{0 \leq y \leq T} M_n(y) \right]. \]
	
	The first term of the above upper bound is bounded (in $n$) since the sequence $\mv{A}_n(0)$ converges by assumption. For the second term, we write
	\[ V_n(y) = \int_0^y \Omega_n(\psi^2)(\mv{A}_n(u)) \d u \leq C(2) \int_0^y \Psi(\mv{A}_n(u)) \d u \]
	where the inequality comes from~\eqref{eq:bound-Omega(psi^i)}. Thus
	\[ \E \left[ \sup_{0 \leq y \leq T} V_n(y) \right] \leq C(2) \int_0^T \E \big[ \Psi(\mv{A}_n(u)) \big] \d u \leq C(2) T \sup_{0 \leq u \leq T} \E \big[ \Psi(\mv{A}_n(u)) \big] \]
	which is finite by~\eqref{eq:bound-psi'}. We now control the last martingale term. For any real-valued random variable $X$ we have $\E(X) \leq 1 + \E(X^2)$ and so Doob's inequality gives
	\[ \E \left( \sup_{0 \leq t \leq T} M_n(t) \right) \leq 1 + 4 \E \left[ \langle M_n(T) \rangle \right] = 1 + 4 \int_0^T \E \left[ \Bcal_n(\psi^2) (\mv{A}_n(u)) \right] \d u. \]

	Defining $\Bcal'_n$ as in~\eqref{eq:bcal} but with the term $- \alpha_{n,0} a_0 / n$ taken equal to $0$, we have
	\[ \Bcal_n(\psi^2)(a) \leq \Bcal_n'(\psi^2)(a) = \Omega'_n(\psi^4)(a) - 2 \psi^2(a) \Omega'_n(\psi^2)(a) \]
	defining $\Omega'_n$ similarly as $\Omega_n$ in~\eqref{eq:Omega} but with the term $- \alpha_{n,0} a_0 / n$ taken equal to $0$. Since the first step in the derivation of~\eqref{eq:bound-Omega(psi^i)} was to use $1 - \alpha_{n,0} a_0 / n \leq 1$, the reasoning leading to~\eqref{eq:bound-Omega(psi^i)} also leads to an upper bound on $\lvert \Omega'_n(\psi^2) \rvert$ of the same kind, i.e., it leads to the existence of a finite constant $C'(i)$ such that $\lvert \Omega'_n(\psi^i)(a) \rvert \leq C'(i) \sum_{j=1}^i \psi^j(a)$. This finally proves that $\Bcal_n(\psi^2)(a) \leq C' \sum_{j=1}^4 \psi^j(a)$ for some finite constant $C'$, and in particular
	\[ \E \left( \sup_{0 \leq t \leq T} M_n(t) \right) \leq 1 + 4 C' T \sum_{j=1}^4 \sup_{0 \leq u \leq T} \E \left[ \psi^j (\mv{A}_n(u)) \right]. \]
	
	Since the supremum over $n \geq 1$ of the right-hand side is finite by~\eqref{eq:bound-psi'}, the proof is complete.
\end{proof}

\subsection{Characterization of accumulation points}

Let $\mv{A} = (A_k, k = 0, \ldots, K+1)$ be any accumulation point of $(\mv{A}_n)$ and assume without loss of generality that $\mv{A}_n \Rightarrow \mv{A}$, i.e., $\mv{A}_n$ converges weakly to $\mv{A}$. Remember that we have to prove that $\mv{A}$ satisfies~\eqref{eq:SL-intermediate} in the intermediate regime,~\eqref{eq:SL-large} in the large regime and~\eqref{eq:SL-small} in the small regime. We treat the three regimes separately.

In each regime, if $f \in \Dcal$ we denote by $\int f$ the function $(\int_0^t f(u) \d u, t \geq 0)$, and we will use the following result in conjunction with the continuous mapping theorem. If $(f_n)$, $(g_n)$ are two sequences of functions with $f_n \to f$ and $g_n \to g$ for some continuous functions $f$ and $g$ (where $\to$ denotes uniform convergence on compact sets), then $f_n g_n \to f g$ and $\int f_n \to \int f$.

In particular, since in all three regimes it holds that $\tau_n \alpha_{n,k} / \alpha_{n,0} = \alpha_{n,k+1} / \alpha_{n,K+1} \to \indicator{k = K}$ and $\alpha_{n,0} / n \to 0$, we have from~\eqref{eq:V_0} that $V_n^{\pi_0} \Rightarrow \int A_K$. Since $\alpha_{n,0} \to +\infty$ and $\delta_{n,0} = -1$ we have $\langle M^{\pi_0}_n \rangle \Rightarrow 0$ by~\eqref{eq:M}, which implies by Doob's inequality that $M^{\pi_0}_n \Rightarrow 0$. Similarly, recalling that $\delta_{n,K+1} = \varepsilon_{n,K+1} = -1$ we have $V_n^{\pi_{K+1}} \Rightarrow \int A_K$ and $M_n^{\pi_{K+1}} \Rightarrow 0$.

\subsubsection{Large initial condition} In this regime, we have $\tau_n \alpha_{n,0} = n$. Since moreover $\delta_{n,0} = \delta_{n, K+1} = \varepsilon_{n,K+1} = -1$, we obtain from~\eqref{eq:V_k} that $V_n^{\pi_1} \Rightarrow \int (\gamma_1 - A_0) A_1$ and $V_n^{\pi_k} \Rightarrow \int (A_{k-1} + (\gamma_k - A_0) A_k)$ for $k = 2, \ldots, K$. In other words, $V_n^{\pi_k} \Rightarrow \int b_k \circ \mv{A}$ for $k = 0, \ldots, K+1$.

Moreover, since $\alpha_{n,k} \to 0$ and $\tau_n / \alpha_{n,k} \to 0$ for $k = 1, \ldots, K$, we obtain from~\eqref{eq:M} that $\langle M_n^{\pi_k} \rangle \Rightarrow 0$ for any $k = 0, \ldots, K+1$. By Doob's inequality, this implies $M_n^{\pi_k} \Rightarrow 0$ and since $A_{n,k} = A_{n,k}(0) + M_n^{\pi_k} + V_n^{\pi_k}$ we finally get $A_{n,k} \Rightarrow a_k + \int b_k \circ \mv{A}$. Since all the above convergences hold jointly as a consequence of the continuous mapping theorem, we get on the one hand that $\mv{A}_n \Rightarrow a + \int b \circ \mv{A}$, while on the other hand, since $\mv{A}_n \Rightarrow \mv{A}$ by assumption, we get $\mv{A} = a + \int b \circ \mv{A}$. Thus $\mv{A}$ solves~\eqref{eq:SL-large} as desired, and since uniqueness of solutions to this ODE is guaranteed by Lemma~\ref{lemma:stab-ODE}, this uniquely characterizes $\mv{A}$.

\subsubsection{Intermediate initial condition} We still have $\tau_n \alpha_{n,0} = n$, so that as in the large regime we have $V_n^{\pi_k} \Rightarrow \int b_k \circ \mv{A}$. Moreover, in this regime we have $\tau_n / \alpha_{n,k} \to 0$ for $k = 2, \ldots, K$ which implies as in the previous regime $M_n^{\pi_k} \Rightarrow 0$ for $k = 0, 2, \ldots, K+1$.

The difference with the large regime is that since $\tau_n = \alpha_{n,1}$, we have $\langle M_n^{\pi_1} \rangle \Rightarrow 2 \int A_1$. Note that $2 \int A_1 = \langle M \rangle$ where $M(t) = \int_0^t (2A_1(u))^{1/2} \d B(u)$. Moreover, since $M_n^{\pi_k} \Rightarrow 0$ for $k \neq 1$ while $\langle M_n^{\pi_1} \rangle \Rightarrow \langle M \rangle$, we get that the quadratic co-variation processes $\langle M_n^{\pi_k}, M_n^{\pi_\ell} \rangle$ vanish for any $k \neq \ell$ by polarization. Since all these convergences hold jointly, Theorem IX.$2$.$4$ in Jacod and Shiryaev~\cite{Jacod03:0} shows that $\mv{A}$ is the semimartingale with characteristics $(b \circ \mv{A}, M)$ in the sense that $\mv{A} = a + \int b \circ \mv{A} + M$, i.e., $\mv{A}$ solves~\eqref{eq:SL-intermediate} (and thus is uniquely determined by Lemma~\ref{lemma:SDE-exact}).

%

\subsubsection{Small initial condition} It follows similarly as in the two previous regimes, noting that in this regime we have $\tau_n \alpha_{n,0} / n \to 0$ (which leads to the drift term $b^S$ instead of $b$) and $\tau_n / \alpha_{n,k} \to \indicator{k = 1}$ (which leads to the non-vanishing diffusion term as in the intermediate regime).

\section{Asymptotic behavior of the outbreak size} \label{sec:proof-outbreak}

This section is devoted to proving Theorem~\ref{thm:outbreak}. For $f \in \Dcal$ we define the operator $\bar T_0(f) = \sup\{ t \geq 0: f(t) > 0 \}$; recall also the various operators defined in Section~\ref{sub:notation}. Then we have $A_{n,0}(\infty) = A_{n,0}(\bar T_0(A_{n,K}))$ and, in order to compute $\bar T_0(A_{n,K})$, we will use the relation $\bar T_0(A_{n,2}) = T_0(A_{n,1}) + T_0 \circ \theta_{T_0(A_{n,1})} (A_{n,2})$ which, iterated, leads to
\[ \bar T_0(A_{n,K}) = \sum_{k=1}^{K-1} T_0 \circ \theta_{T_0(A_{n,k-1})} \circ \cdots \circ \theta_{T_0(A_{n,1})} (A_{n,k}). \]

There are two difficulties to solve in order to prove Theorem~\ref{thm:outbreak}: the first one is that hitting times are in general not continuous functional, i.e., we may have $f_n \to f$ but $T_0(f_n) \not \to T_0(f)$. The second difficulty is that, as Proposition~\ref{thm:SL-intermediate} shows, $\bar T_0(A_K) = +\infty$ for $K \geq 2$ while the convergence $\mv{A}_n \Rightarrow \mv{A}$ holds uniformly on \emph{compact} sets (see the discussion following Theorem~\ref{thm:outbreak}). We address these two difficulties in two steps.

\subsection{First step} The goal of this first step is to prove that $(\mv{A}_n, T_0(A_{n,1})) \Rightarrow (\mv{A}, T_0(A_1))$. Assume first that for every $\delta > 0$,
\begin{equation} \label{eq:limsup}
	\limsup_{n \to +\infty} \P \left( T_0(A_{n,1}) - T^\downarrow_\varepsilon(A_{n,1}) \geq \delta \right) \mathop{\longrightarrow}_{\varepsilon \to 0} 0.
\end{equation}

We now argue that this implies $(\mv{A}_n, T_0(A_{n,1})) \Rightarrow (\mv{A}, T_0(A_1))$. First of all, note that if $T_0(A_{n,1}) \Rightarrow T_0(A_1)$ then the joint convergence automatically holds, see for instance Corollary~$2.2$ in Lambert et al.~\cite{Lambert13:0}.

To see that~\eqref{eq:limsup} implies $T_0(A_{n,1}) \Rightarrow T_0(A_1)$, let us say that $A_1$ \emph{goes across} $\varepsilon$ if for every $\eta > 0$ we have $\inf_{0 \leq t \leq \eta} A_1(T^\downarrow_\varepsilon(A_1) + t) < \varepsilon$, and define the random set $\Gcal$ through $\Gcal = \{ \varepsilon > 0: A_1 \text{ goes across } \varepsilon \}$. Then it is known (and actually easy to show) that if $T^\downarrow_\varepsilon(A_1)$ is almost surely finite and $\P(\varepsilon \in \Gcal) = 1$, then $T^\downarrow_\varepsilon(A_{n,1}) \Rightarrow T^\downarrow_\varepsilon(A_1)$, see for instance Proposition~VI.$2$.$11$ in Jacod and Shiryaev~\cite{Jacod03:0} or Lemma~$3.1$ in Lambert and Simatos~\cite{Lambert+:2}. Note that in our case, $T^\downarrow_\varepsilon(A_1)$ is finite by Proposition~\ref{prop:A}.

On the other hand, the complement $\Gcal^c$ of $\Gcal$ is precisely the set of discontinuities of the process $(T^\downarrow_\varepsilon(A_1), \varepsilon > 0)$. Since $(T^\downarrow_\varepsilon(A), \varepsilon > 0)$ is c\`agl\`ad, as the left-continuous inverse of the process $(\inf_{[0,t]} A_1, t \geq 0)$, the set $\{ \varepsilon > 0: \P(\varepsilon \in \Gcal^c) > 0\}$ (sometimes called set of fixed times of discontinuities) is at most countable, see for instance Billingsley~\cite[Section $13$]{Billingsley99:0}. Gathering these two observations, we see that $T^\downarrow_\varepsilon(A_{n,1}) \Rightarrow T^\downarrow_\varepsilon(A_1)$ for all $\varepsilon > 0$ outside a countable set. Then, writing
\begin{multline*}
	\P \left( T_0(A_{n,1}) \geq x \right) = \P \left( T_0(A_{n,1}) \geq x, T_0(A_{n,1}) - T^\downarrow_\varepsilon(A_{n,1}) \geq \delta \right)\\
	+ \P \left( T_0(A_{n,1}) \geq x, T_0(A_{n,1}) - T^\downarrow_\varepsilon(A_{n,1}) < \delta \right),
\end{multline*}
using~\eqref{eq:limsup} and playing with quantifiers gives the convergence of $T_0(A_{n,1})$ toward $T_0(A_1)$. Indeed, we can for instance write
\[ \P \left( T_0(A_{n,1}) \geq x \right) \leq \P \left( T_0(A_{n,1}) - T^\downarrow_\varepsilon(A_{n,1}) \geq \delta \right) + \P \left( T^\downarrow_\varepsilon(A_{n,1}) \geq x - \delta \right), \]
then choose $\varepsilon$ such that $T^\downarrow_\varepsilon(A_{n,1}) \to T^\downarrow_\varepsilon(A_1)$ to get by the portmanteau theorem, for any $\delta > 0$,
\[ \limsup_{n \to +\infty} \P \left( T_0(A_{n,1}) \geq x \right) \leq \limsup_{n \to +\infty} \P \left( T_0(A_{n,1}) - T^\downarrow_\varepsilon(A_{n,1}) \geq \delta \right) + \P \left( T^\downarrow_\varepsilon(A_1) \geq x - \delta \right). \]

Since $T^\downarrow_\varepsilon(A_1) \to T_0(A_1)$ as $\varepsilon \to 0$, we get by letting first $\varepsilon \to 0$ and then $\delta \to 0$, and using~\eqref{eq:limsup},
\[ \limsup_{n \to +\infty} \P \left( T_0(A_{n,1}) \geq x \right) \leq \P \left( T_0(A_1) \geq x \right). \]

Since $x$ was arbitrary, this shows that $T_0(A_{n,1}) \Rightarrow T_0(A_1)$ by the portmanteau theorem. In conclusion,~\eqref{eq:limsup} indeed implies $T_0(A_{n,1}) \Rightarrow T_0(A_1)$.
\\

The proof of~\eqref{eq:limsup} relies on a simple coupling between $A_{n,1}$ and a continuous-time branching process (more precisely, a Bellman-Harris branching process). Looking at the transition rates of the process $a_{n,1}$ in~\eqref{eq:transition-rates-a_n}, we see that $A_{n,1}$ decreases by $1/\alpha_{n,1}$ at rate $(1+\delta_{n,1}) A_{n,1} \alpha_{n,1}^2$ and increases by $1/\alpha_{n,1}$ at rate
\[ (1 + \varepsilon_{n,1}) A_{n,1} (1-A_{n,0} / \alpha_{n,1}) \alpha_{n,1}^2 \leq (1 + \varepsilon_{n,1}) A_{n,1} \alpha_{n,1}^2. \]

In particular, shifting the origin of time at $T^\downarrow_\varepsilon(A_{n,1})$ and using the strong Markov property, one sees that $A_{n,1}$ can be coupled with a Markov process $Z_{n,1}$ in such a way that $Z_{n,1}(0) = \alpha_{n,1} \lfloor \varepsilon / \alpha_{n,1} \rfloor$, $A_{n,1}(T^\downarrow_\varepsilon(A_1) + t) \leq Z_{n,1}(t)$ for $t \geq 0$, and $Z_{n,1}$ decreases by $1/\alpha_{n,1}$ at rate $(1+\delta_{n,1}) Z_{n,1} \alpha_{n,1}^2$ and increases by $1/\alpha_{n,1}$ at rate $(1+\varepsilon_{n,1}) Z_{n,1} \alpha_{n,1}^2$ (note that the law of $Z_{n,1}$ depends on $\varepsilon$, but we omit this dependency in order to ease the notation). More concretely, this coupling can for instance be realized by adding a ``ghost'' individual in the population each time an individual in stage one makes an unsuccessful infection attempt (alternatively, we could also invoke the comparison result of Rogers and Williams~\cite[Theorem~V.$43.1$]{Rogers87:0}). Using the strong Markov property at time $T^\downarrow_\varepsilon(A_{n,1})$ and this coupling, we get
\[ \P \left( T_0(A_{n,1}) - T^\downarrow_\varepsilon(A_{n,1}) \geq \delta \right) \leq \P \left( T_0(Z_{n,1}) \geq \delta \right). \]

It is well-known that $Z_{n, 1} \Rightarrow Z_1$, where $Z_1$ is Feller diffusion with drift $\gamma_1$ started at $\varepsilon$ (see for instance Ethier and Kurtz~\cite[Chapter $9$]{Ethier86:0}). Moreover, standard arguments can be used to show that $T_0(Z_{n,1}) \Rightarrow T_0(Z_1)$, for instance by using the fact that $Z_{n,1}$ and $Z_1$ are time-change of L\'evy processes killed at $0$, say $Y_n$ and $Y$, so that $T_0(Z_{n,1}) = \int_0^\infty Y_n \Rightarrow \int_0^\infty Y = T_0(Z_1)$ (this time-change transformation is usually called Lamperti transformation, see for instance Lamperti~\cite{Lamperti67:1}). Thus we have (making clear the role of the initial condition)
\[ \limsup_{n \to +\infty} \P \left( T_0(A_{n,1}) - T^\downarrow_\varepsilon(A_{n,1}) \geq \delta \right) \leq \P \left( T_0(Z_1) \geq \delta \mid Z_1(0) = \varepsilon \right). \]

Since $T_0(Z_1) \Rightarrow 0$ as $Z_1(0) \to 0$, we have finally proved~\eqref{eq:limsup}, which concludes the first step.

\subsection{Second step}

The first step shows, by using the strong Markov property at time $T_0(A_{n,1})$, that we only need to prove Theorem~\ref{thm:outbreak} when $A_{n,1}(0) = 0$. In this case, Theorem~\ref{thm:SL-intermediate} shows that $\mv{A}_n \Rightarrow \mv{A}$, where $\mv{A}$ is a solution to the ODE~\eqref{eq:ODE} with $y = 0$.

With this initial condition, we have $A_{n,1}(t) = 0$ for all $t \geq 0$, and $\bar \pi \circ \mv{A}_n$ is a Markov process: actually, it is a multistage epidemic process with $K{-}1$ stages. The problem to iterate the arguments of the first step is that Proposition~\ref{prop:A} shows that, although $T^\downarrow_\varepsilon(A_2) < +\infty$ for every $\varepsilon > 0$, we have $T_0(A_2) = +\infty$. To get round this problem, we will use a time-change argument. Such an idea is classical in the SIR case $K = 1$, see for instance von Bahr and Martin-L\"of~\cite{Bahr80:0}.
\\

Let us set up a similar coupling as in the first step. The process $A_{n,2}$ decreases by $1/\alpha_{n,1}^2$ at rate $(1+\delta_{n,2}) \alpha_{n,1}^3 A_{n,2}$ and increases by $1/\alpha_{n,1}^2$ at rate
\[ (1+\varepsilon_{n,2}) \alpha_{n,1}^3 A_{n,2} (1-A_{n,0}/\alpha_{n,1}) \leq (1+\varepsilon_{n,2}) \alpha_{n,1}^3 A_{n,2} (1-A_{n,0}(0)/\alpha_{n,1}), \]
where the inequality follows from the monotonicity of $A_{n,0}$. Thus similarly as in the first step, we can couple $A_{n,2}$ with a continuous-time Markovian branching process $Z_{n,2}$ such that $Z_{n,2}(0) = A_{n,2}(0),$ $A_{n,2}(t) \leq Z_{n,2}(t)$ for $t \geq 0$, and $Z_{n,2}$ decreases by $1/\alpha_{n,1}^2$ at rate $(1+\delta_{n,2}) \alpha_{n,1}^3 Z_{n,2}$ and increases by $1/\alpha_{n,1}^2$ at rate $(1+\varepsilon_{n,2}) \alpha_{n,1}^3 Z_{n,2} (1-A_{n,0}(0)/\alpha_{n,1})$. In particular, $Z_{n,2} \Rightarrow z_2$ with $z_2(t) = z_2(0) \exp(-(A_0(0) - \gamma_2) t)$.

Since by Proposition~\ref{prop:A}, $A_0$ is strictly increasing with $A_0(\infty) > \gamma_2$, we can assume without loss of generality by shifting the processes at time $T^\uparrow_{\gamma_2}(A_{n,0}) + 1$ that $A_0(0) > \gamma_2$, so that each $z_2$ vanishes exponentially fast. The problem, as mentioned earlier, is that $T_0(z_2) = +\infty$: we now introduce the time-change argument.
\\

Let $C_{n,2}$ be the right-continuous inverse of $t \mapsto \int_0^t Z_{n,2}$ and $c_2$ be the right-continuous inverse of $t \mapsto \int_0^t z_2$, in the sense that $\int_0^{C_{n,2}(t)} Z_{n,2} = t$ for $t < \int_0^\infty Z_{n,2}$ and $\int_0^{c_2(t)} z_2 = t$ for $t < \int_0^\infty z_2$. Since $\int_0^\infty z_2 < +\infty$, $c_2$ blows up at time $\int_0^\infty z_2$. Moreover, such random time-change transformations induce continuous mappings, so that $\mv{A}_n \circ C_{n,2} \Rightarrow \mv{A} \circ c_2$ and $Z_{n,2} \circ C_{n,2} \Rightarrow z_2 \circ c_2$, see for instance Helland~\cite{Helland78:0}.

Time-changing $Z_{n,2}$ with $C_{n,2}$ actually corresponds to the Lamperti transformation mentioned above: $Z_{n,2} \circ C_{n,2}$ is a continuous-time random walk (killed at $0$), $z_2 \circ c_2$ starts at $A_2(0)$ and decays linearly at rate $A_0(0) - \gamma_2$, i.e., $z_2(c_2(t)) = A_2(0) - (A_0(0) - \gamma_2) t$ for $t \leq \int_0^\infty z_2$, and $T_0(Z_{n,2} \circ C_{n,2}) \Rightarrow T_0(z_2 \circ c_2)$. In particular, since
\[ \P \left( T_0(A_{n,2} \circ C_{n,2}) - T^\downarrow_\varepsilon(A_{n,2} \circ C_{n,2}) \geq \delta \right) \leq \P \left( T_0(Z_{n,2} \circ C_{n,2}) \geq \delta \mid Z_{n,2}(0) = \varepsilon \right) \]
we obtain
\[ \limsup_{n \to +\infty} \P \left( T_0(A_{n,2} \circ C_{n,2}) - T^\downarrow_\varepsilon(A_{n,2} \circ C_{n,2}) \geq \delta \right) \mathop{\longrightarrow}_{\varepsilon \to 0} 0 \]
and the arguments of the first step imply that
\[ \left( \mv{A}_n \circ C_{n,2}, T_0(A_{n,2} \circ C_{n,2}) \right) \Rightarrow \left( \mv{A} \circ c_2, T_0(A_2 \circ c_2) \right). \]

In particular, the strong Markov property at time $T_0(A_{n,2} \circ C_{n,2})$ shows that
\[ (\mv{A}_n \circ C_{n,2}) \left( T_0(A_{n,2} \circ C_{n,2}) \right) = \mv{A}_n(T_0(A_{n,2})) \Rightarrow (x \circ c_2) \left(T_0(A_2 \circ c_2) \right) = x(\infty) = 0.  \]

By using the strong Markov property at time $T_0(A_{n,2})$ and iterating this argument, we finally end up with the desired result that $A_{n,0}(\infty) \Rightarrow A_0(\infty)$, which concludes the proof of Theorem~\ref{thm:outbreak}.

\section{An intriguing conjecture} \label{sec:conjecture}

We conclude this paper by discussing a conjecture formulated in~\cite{Antal12:0} which initially motivated the present work.

\begin{conjecture*}[Antal and Krapivsky~\cite{Antal12:0}]
	Assume that $\varepsilon_{n,k} = \delta_{n,k} = 0$ and let $N_{n,k}$ be the number of individuals ever being of type $k = 1, \ldots, K$ over the course of the epidemic, starting from the initial condition $a_{n,1}(0) = 1$ and $a_{n,k}(0) = 0$ for $k = 2, \ldots, K+1$. Then $\E (N_{n,k})$ grows as $n \to +\infty$ like $n^{k \lambda_K}$, where
	\[ \lambda_K = \frac{2^K-1}{(K+1) 2^K - 1}. \]
\end{conjecture*}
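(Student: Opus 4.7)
The plan is to combine a branching approximation for the initial sub-small regime with the scaling limits of Theorem~\ref{thm:SL-intermediate} and Theorem~\ref{thm:outbreak} for the saturated regime, and then to compute $\E(N_{n,k})$ by decomposing trajectories according to the maximum value attained by $a_{n,1}$. Since $a_{n,1}(0) = 1$ lies strictly below the small-regime threshold of Proposition~\ref{prop:SL-small}, the first step is to set up the correct coupling for the seeding phase.

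\textbf{Step 1 (branching approximation).} While $a_{n,0}/n \approx 1$, the process $a_{n,1}$ is well-approximated by $a^B_1$ from~\eqref{eq:transition-rates-BP-critical}, a critical continuous-time Galton--Watson process with Geom$(1/2)$ offspring. A coupling analogous to the one used in Section~\ref{sec:proof-outbreak} together with the Lamperti time-change yields $\P(\sup_t a^B_1 \geq m) \asymp 1/m$ and, conditional on $\{\sup_t a^B_1 \geq m\}$, the whole multitype branching process $\mv{a}^B$ rescaled by $m$ converges to the solution of~\eqref{eq:SL-small}. The geometric scaling then gives $a^B_j \asymp m^j$ over a stage-$1$ excursion of duration $\asymp m$, hence $\E(N_{n,k} \mid \sup_t a_{n,1} = m) \asymp m^{k+1}$ for each $k = 1, \ldots, K$.

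\textbf{Step 2 (intermediate regime).} Set $\kappa_n := n^{1/(K+2)}$. On the event $\{T^\uparrow_{\kappa_n}(a_{n,1}) < +\infty\}$, which has probability $\asymp \kappa_n^{-1} = n^{-1/(K+2)}$, the strong Markov property applied at $T^\uparrow_{\kappa_n}(a_{n,1})$, combined with Theorem~\ref{thm:SL-intermediate} and Theorem~\ref{thm:outbreak}, yields convergence of the renormalized outbreak $A_{n,k}(\infty) \Rightarrow A_k(\infty)$. Writing $N_{n,k} = 1 + \sum_{j=1}^{k} I_{n,j}$, where $I_{n,j}$ counts the stage-$j$ infection events (so that $I_{n,j} - \int_0^\cdot (1 + \varepsilon_{n,j}) a_{n,j} a_{n,0}/n \, ds$ is a martingale), and transferring the compensator to the limiting SDE~\eqref{eq:SL-intermediate}, one obtains $\E(N_{n,k} \mid T^\uparrow_{\kappa_n}(a_{n,1}) < +\infty) \asymp n^{(k+1)/(K+2)}$.

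\textbf{Step 3 (combining).} Writing $M_{n,1} := \sup_t a_{n,1}(t)$ and using $\P(M_{n,1} = m) \asymp m^{-2}$ in the branching-dominated range $1 \leq m \ll \kappa_n$,
\[
\E(N_{n,k}) \;\asymp\; \int_1^{\kappa_n} m^{k+1} \cdot m^{-2}\, dm \;+\; \kappa_n^{-1} \cdot n^{(k+1)/(K+2)} \;\asymp\; n^{k/(K+2)}.
\]

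\textbf{Main obstacle.} The above gives $\E(N_{n,k}) \asymp n^{k/(K+2)}$, which matches the conjectured $n^{k\lambda_K}$ only when $K=1$ (where $\lambda_1 = 1/3 = 1/(K+2)$); for $K \geq 2$ one has $\lambda_K > 1/(K+2)$, so the plan falls strictly short of the conjectured value. The apparent source of the gap is that the naive saturation scale $\kappa_n = n^{1/(K+2)}$ for stage $1$ may not be the right one when starting from a single individual: by Proposition~\ref{prop:A}, the processes $A_k$ for $k \geq 2$ never hit zero, so stages $2, \ldots, K$ continue to evolve on substantially longer time scales after stage $1$ is extinct, and the depletion of $a_{n,0}$ is driven mainly by these later stages. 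Making rigorous a self-consistent multi-stage scaling---rather than treating stage $1$'s saturation in isolation---is the technical heart of the problem and appears to lie beyond the techniques developed in this paper; it would be needed either to confirm the conjectured exponent $\lambda_K$ or to refute it.
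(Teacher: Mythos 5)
The statement you were asked to prove is an open conjecture that the paper itself does not prove. Section~\ref{sec:conjecture} of the paper is devoted precisely to discussing it: the author runs a heuristic (via random partitioning and the susceptibility identity $\E(\lvert \Pi^* \rvert) \approx \E(\lvert \Pi_{(1)}\rvert^2)/n$, with $\lvert \Pi_{(1)} \rvert \asymp n^{(K+1)/(K+2)}$ supplied by Theorems~\ref{thm:SL-intermediate} and~\ref{thm:outbreak}) and arrives at $\E(N_{n,K}) \approx n^{K/(K+2)}$ --- exactly the exponent your Step~3 produces --- and then explicitly observes that this \emph{disagrees} with the conjectured $n^{K\lambda_K}$ for $K \geq 2$, calling the discrepancy ``very intriguing'' and leaving it unresolved. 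So there is no proof in the paper to compare yours against.

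That said, your write-up is a faithful and self-aware reconstruction of the state of affairs. Your excursion decomposition over $M_{n,1} = \sup_t a_{n,1}(t)$, with $\P(M_{n,1} \geq m) \asymp 1/m$ and $\E(N_{n,k} \mid M_{n,1} = m) \asymp m^{k+1}$ in the branching-dominated range, is a different (and arguably more transparent) route to the same heuristic answer $n^{k/(K+2)}$ as the paper's random-partition argument; the two computations are consistent with each other and both match the conjecture only at $K=1$. Your ``Main obstacle'' paragraph correctly identifies why neither computation settles the matter for $K \geq 2$: the single-seed epidemic is not simply ``subcritical branching until stage~1 saturates, then the intermediate regime,'' because (Proposition~\ref{prop:A}) the later stages persist on longer time scales and the temporal competition between stages --- which the paper also flags in its introduction as the source of the discrepancy with the Reed--Frost picture --- is not captured by conditioning on $\sup_t a_{n,1}$ alone. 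In short: you have not proved the conjecture, but nobody has; your honest assessment coincides with the paper's, and the only thing to flag is that Steps~1--3 should be presented as a heuristic consistency check against the conjecture rather than as partial progress toward a proof of it.
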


Note that, with our notation, $N_{n,K} = a_{n,K+1}(\infty)$, since any individual ever removed must have been in stage $K$ of the epidemic at some point (and vice-versa). On the other hand, Theorem~\ref{thm:outbreak} shows that starting with of the order of $n^{1/(K+2)}$ individuals in stage one (instead of just one as in the above conjecture), $a_{n,K+1}(\infty)$ is, in distribution, of the order of $n^{(K+1)/(K+2)}$. When $K = 1$, there is a well-known argument that links these two objects: the connection goes through a random partitioning of $\{1,\ldots,n\}$.
\\

Consider indeed the following model, where $n$ individuals are assigned a unique label from the set $\{1,\ldots,n\}$ and which results in a random partition $\Pi_1, \ldots, \Pi_S$ of the set $\{1, \ldots, n\}$. Imagine that $\Pi_1, \ldots, \Pi_s$ have been generated and that the set $F_s = \{ 1, \ldots, n \} \setminus (\Pi_1 \cup \cdots \cup \Pi_s)$ is not empty: then the iteration proceeds as follows. Choose an individual $v$ uniformly at random from $F_s$, and run the epidemic with the following initial condition: at time $0$ the individuals in $F_s \setminus \{v\}$ are susceptible and the individuals in $\Pi_1 \cup \cdots \cup \Pi_s$ are removed, so that only $v$ is infected (and, in the case $K \geq 2$, is in the first stage of the epidemic). Eventually, this epidemic will die out and we define $\Pi_{s+1}$ as the set of individuals infected over the course of this epidemic.

This connection between random partition and epidemic processes is well-known, see for instance Barbour and Mollison~\cite{Barbour90:0}. In the case $K {=} 1$ this links clusters of the Erd\"os-R\'enyi random graph to the SIR process. This construction leads to several interesting by-products, one of them being that it makes it possible to compute the mean size of a typical cluster in terms of the mean size of the largest ones. More precisely, choose an individual $v$ uniformly at random in $\{1,\ldots,n\}$ and let $\Pi^*$ be the cluster to which it belongs. Let moreover $(\Pi_{(i)}, i \geq 1)$ be the clusters ordered in decreasing size, i.e., $\{\Pi_{(i)}\} = \{\Pi_i\}$ and $\lvert \Pi_{(1)} \rvert \geq \lvert \Pi_{(2)} \rvert \geq \cdots$ with $\lvert E \rvert$ the size of a set $E \subset \N$. Then the probability that $v$ belongs to $\Pi_{(i)}$ is exactly $\lvert \Pi_{(i)} \rvert/n$, in which case $\Pi^* = \Pi_{(i)}$ and so
\[ \E(\lvert \Pi^* \rvert) = \sum_{i \geq 1} \E \left( \frac{\lvert \Pi_{(i)} \rvert}{n} \lvert \Pi_{(i)} \rvert \right). \]

Assuming that the largest term in this sum dominates, we get the approximation
\begin{equation} \label{eq:approximation-susceptibility}
	\E\left(\lvert \Pi^* \rvert\right) \approx \frac{1}{n} \E\left(\lvert \Pi_{(1)}^2\rvert\right).
\end{equation}

When $K = 1$, $\lvert \Pi^* \rvert$ is, by exchangeability, equal in distribution to $N_{n,1}$ and so $\E(\lvert \Pi^* \rvert) = \E(N_{n,1})$. Moreover, $\lvert \Pi_{(1)} \rvert$ is of the same order than an outbreak size started from an \emph{intermediate} initial condition. Roughly speaking, this comes from the fact that the intermediate regime is precisely the one where the finite-size population effects begin to kick in. In particular, $\lvert \Pi_{(1)} \rvert$ is of the order of $n^{(K+1) / (K+2)} = n^{2/3}$. Gathering these two observations, we end up thanks to~\eqref{eq:approximation-susceptibility} with the relation $\E(N_{n,1}) \approx n^{4/3 - 1} = n^{1/3}$, when $K = 1$. This answer coincides with Antal and Krapivsky's conjecture and the above reasoning through random partitioning can be made rigorous, see, e.g., Janson and Luczak~\cite{Janson08:0}.
\\

It is tempting to also use this reasoning for $K \geq 2$, and this initially motivated us to identify the intermediate regime and the scalings at play there. However, this reasoning would lead to the estimate
\[ \E(N_{n,K}) \approx \frac{1}{n}n^{2(K+1)/(K+2)} = n^{K/(K+2)} \]
which is different (for $K \geq 2$) from the $n^{K \lambda_K}$ predicted by Antal and Krapivsky~\cite{Antal12:0}. We find this discrepancy, and the related fact that the above reasoning through random partitioning seems to fail, very intriguing. We believe that the temporal aspects intrinsic to this multistage epidemic (see the discussion of the main results in the introduction) play a major role in this discrepancy, although it is challenging to obtain rigorous results in that direction.

\appendix

\section{Proof of Lemmas~\ref{lemma:stab-ODE} and~\ref{lemma:properties-ODE}} \label{appendix:proof-ODE}

Let $x$ be any solution to~\eqref{eq:ODE} defined on the interval $J = [0,t^*)$ for some $t^* \in (0,\infty]$. It will be convenient to define $x_1 = y$ and to index the $\R^{K+1}$-valued function $x$ by the set $\{0,2,\ldots,K+1\}$, i.e., to write $x = (x_0, x_2, \ldots, x_{K+1})$. Let also in the sequel
\[ I_k(t) = \int_0^t \left(x_0(u) - \gamma_k\right) \d u, \ t \in J, k = 1, \ldots, K. \]

Then $(e^{I_k} x_k)' e^{-I_k} = (x_0 - \gamma_k) x_k + x'_k$ is equal to $x_{k-1}$ for $k = 2, \ldots, K$ by~\eqref{eq:ODE}. Thus for these $k$ we have $(e^{I_k} x_k)' e^{-I_k} = x_{k-1}$, which can be rewritten as
\begin{equation} \label{eq:integral-form}
	x_k(t) = \left( x_k(0) + \int_0^t x_{k-1}(u) e^{I_k(u)} \d u \right) e^{-I_k(t)}, \ k = 2, \ldots, K, \ t \in J.
\end{equation}

\subsection{Proof of Lemma~\ref{lemma:stab-ODE}}

Note first that the representation~\eqref{eq:integral-form} implies that $x(t) \in [0,\infty)^{K+1}$ for every $t \in J$. Indeed, since $x_1(t) = y(t) \geq 0$, this implies that $x_2(t) \geq 0$ and by induction on $k$, this implies that $x_k(t) \geq 0$ for every $k = 2, \ldots, K$ and $t \in J$. Since finally $x_0(0), x_{K+1}(0) \in [0,\infty)$ and $x'_0 = x'_{K+1} = x_K$ which has just been showed to stay non-negative, we obtain that $x_0$ and $x_{K+1}$ also stay non-negative.

Let us now prove Lemma~\ref{lemma:stab-ODE}, i.e., existence and uniqueness of solutions to~\eqref{eq:ODE}. Since $F$ is locally Lipschitz, the Picard-Lindelh\"of theorem implies local existence and uniqueness to~\eqref{eq:ODE}. To show this globally, we only have to show that local solutions do not explode. Since $x'_2 = y + (\gamma_2 - x_0) x_2$ and $x_2, x_0 \geq 0$, we obtain $x_2' \leq y + \gamma_2^+ x_2$ where $\gamma^+ = \max(0,\gamma)$ for any $\gamma \in \R$. Gronwall's lemma thus shows that $x_2$ does not explode. Similarly, for $k = 3, \ldots, K$ we have $x'_k \leq x_{k-1} + \gamma_k^+ x_k$ and so $x_k$  also does not explode. Finally, since $x_0' = x_{K+1}' = x_K$, solutions stay locally bounded which proves the global existence and uniqueness on $[0,\infty)$.

\subsection{Proof of Lemma~\ref{lemma:properties-ODE}}

We prove each property separately. As mentioned earlier, in the rest of the proof we define $x_1 = y$.
\\

\noindent \textit{Proof of~\ref{ODE:>0}.} Assume that $y(0) > 0$ or $a_2 > 0$: then it is clear from~\eqref{eq:integral-form} that $x_2(t) > 0$ for all $t > 0$. By induction, we see that $x_k(t) > 0$ for all $k = 2, \ldots, K$ and $t > 0$.
\\

\noindent \textit{Proof of~\ref{ODE:boundedness}.} Assume that $x_k(t) > 0$ for $k = 2, \ldots, K$ and $t > 0$, and that $x_0$ is bounded: we prove by backwards induction on $k$ that $\int_0^\infty x_k$ is finite for $k = 1, \ldots, K$ and that $x_0(\infty) > \gamma_k$ for $k = 2, \ldots, K$. For $k = K$, the fact that $\int_0^\infty x_K$ is finite comes from the fact that $x_0$ is bounded and non-decreasing, and so its derivative $x_K$ is integrable on $[0,\infty)$. Consider now any $2 \leq k \leq K$ and assume that $\int_0^\infty x_k$ is finite: we prove that $\int_0^\infty x_{k-1}$ is finite and that $x_0(\infty) > \gamma_k$.

Since $\int_0^\infty x_k$ is finite, there must exist a sequence $t_n \to +\infty$ such that $x_k(t_n) \to 0$. Moreover, we have by definition $x_k' = x_{k-1} + (\gamma_k - x_0) x_k$ and so integrating between times $0$ and $t_n$ we obtain
\[ \int_0^{t_n} x_{k-1} = x_k(t_n) - x_k(0) + \int_0^{t_n} (x_0(u) - \gamma_k) x_k(u) \d u \leq x_k(t_n) + (x_0(\infty) - \gamma_k) \int_0^{t_n} x_k. \]

Letting $n \to +\infty$, we obtain the inequality $\int_0^\infty x_{k-1} \leq (x_0(\infty) - \gamma_k) \int_0^\infty x_k$ which shows, by induction, that $\int_0^\infty x_{k-1}$ is finite and also that $x_0(\infty) > \gamma_k$ (since $\int_0^\infty x_{k-1} > 0$).
\\

\noindent \textit{Proof of~\ref{ODE:to0}.} Assume that $y(t) = 0$ for all $t \geq 0$ and that $a_2 > 0$: in particular, according to~\ref{ODE:>0}, we see that $x_k(t) > 0$ for every $t > 0$ and $k = 0, 2, \ldots, K$, a fact that will repeatedly be used in the sequel. We begin by proving the following formula:
\begin{equation} \label{eq:iteration}
	x_k(t) = \left( \sum_{i=0}^{k-2} x_{k-i}(0) \phi_{k,i}(t) \right) \times \exp \left( -\int_0^t (x_0(u) - \gamma_k) \d u \right), \ k = 2, \ldots, K, \ t \geq 0,
\end{equation}
where the functions $\phi_{k,i}$ for $k = 2, \ldots, K$ and $i = 0, \ldots, k-2$ are defined recursively by $\phi_{k,0}(t) = 1$ and for $i = 1, \ldots, k-2$,
\begin{equation} \label{eq:recursion-phi}
	\phi_{k,i}(t) = \int_0^t \phi_{k-1,i-1}(u) e^{\eta_k u} \d u, \ \text{ with } \ \eta_k = \gamma_{k-1} - \gamma_k \ \ (k = 3, \ldots, K).
\end{equation}


We prove~\eqref{eq:iteration} by induction on $k$. For $k = 2$, we have by~\eqref{eq:integral-form}, and since $x_1 = 0$, $x_2(t) = x_2(0) e^{-I_2(t)}$ which is precisely~\eqref{eq:iteration}. Assume now that~\eqref{eq:iteration} holds for $k \geq 2$ and let us prove it for $k + 1$: plugging in~\eqref{eq:iteration} into~\eqref{eq:integral-form}, we obtain
\[ x_{k+1}(t) = \left( x_{k+1}(0) + \int_0^t \left( \sum_{i=0}^{k-2} x_{k-i}(0) \phi_{k,i}(u) \right) e^{-I_k(u)} \times e^{I_{k+1}(u)} \d u \right) e^{-I_{k+1}(t)}. \]

Using $I_{k+1}(u) - I_k(u) = (\gamma_k-\gamma_{k+1})u = \eta_{k+1} u$, exchanging the integral and the sum and changing variables in the sum, we obtain
\[ x_{k+1}(t) = \left( x_{k+1}(0) + \sum_{i=1}^{k-1} x_{k+1-i}(0) \int_0^t \phi_{k,i-1}(u) e^{\eta_{k+1} u} \d u \right) e^{-I_{k+1}(t)} \]
from which we get~\eqref{eq:iteration} by~\eqref{eq:recursion-phi}. We now prove that $x_0(\infty)$ is finite by contradiction, so assume that $x_0(\infty) = +\infty$. Starting from the definition~\eqref{eq:recursion-phi} of the $\phi_{k,i}$'s, we get by induction that $\phi_{k,i}(t) \leq e^{i \eta^* t}$ with $\eta^* = 1 + \max_{3 \leq j \leq K} |\eta_j|$. In particular, we get from~\eqref{eq:iteration} for $k = K$ that
\[ x_K(t) \leq \left( \sum_{i=0}^{k-2} x_{k-i}(0) \right) e^{K \eta^* t} \times \exp \left( -\int_0^t (x_0(u) - \gamma_K) \d u \right). \]

Since $x_0(t) - \gamma_K > K \eta^*$ for $t$ large enough (since we are assuming $x_0(\infty) = +\infty$), the last display implies that $x_K$ converges to $0$ exponentially fast, and in particular $\int_0^\infty x_K < +\infty$. Since $x_0' = x_K$, $x_0(\infty)$ is finite, which yields the contradiction. Thus $x_0(\infty)$ must be finite and so the conclusions of~\ref{ODE:boundedness} apply, in particular $x_0(\infty) > \gamma_k$ for every $2 \leq k \leq K$.

We now complete the proof and show that $x_k(t) \to 0$ for every $k = 2, \ldots, K$: in view of~\eqref{eq:iteration} we only have to show that
\begin{equation} \label{eq:limit}
	\lim_{t \to +\infty} \left( \phi_{k,i}(t) e^{- \int_0^t (x_0 - \gamma_k)} \right) = 0
\end{equation}
for every $i = 0, \ldots, K-2$ and every $k = i+2, \ldots, K$. We prove this by induction on $i$: for $i = 0$ this comes immediately from the facts that $\phi_{k,0}(t) = 1$ and $x_0(\infty) > \gamma_k$. So assume that~\eqref{eq:limit} holds for some $i = 0, \ldots, K-2$ and every $k = i+2, \ldots, K$: we show that it also holds for $i+1$ and $k = i+3, \ldots, K$. By definition~\eqref{eq:recursion-phi} we have
\[ \phi_{k,i+1}(t) e^{- \int_0^t (x_0 - \gamma_k)} = \left(\int_0^t \phi_{k-1,i}(u) e^{\eta_k u} \d u\right) e^{- \int_0^t (x_0 - \gamma_k)}. \]

Let $\varepsilon > 0$ and, by induction hypothesis, $t^*$ such that $\phi_{k-1,i}(t) \leq \varepsilon e^{\int_0^t (x_0 - \gamma_{k-1})}$ for every $t \geq t^*$. Then for such $t$,
\begin{multline*}
	\phi_{k,i+1}(t) e^{- \int_0^t (x_0 - \gamma_k)} \leq \left(\int_0^{t^*} \phi_{k-1,i}(u) e^{\eta_k u} \d u\right) e^{- \int_0^t (x_0 - \gamma_k)}\\
	+ \varepsilon \left(\int_0^t e^{\int_0^u (x_0 - \gamma_{k-1})} e^{\eta_k u} \d u\right) e^{- \int_0^t (x_0 - \gamma_k)}.
\end{multline*}

Since the first term of the above upper bound vanishes as $t \to +\infty$ and we can rewrite the second term as
\[ \left(\int_0^t e^{\int_0^u (x_0 - \gamma_{k-1})} e^{\eta_k u} \d u\right) e^{- \int_0^t (x_0 - \gamma_k)} = \left(\int_0^t e^{\int_0^u (x_0 - \gamma_k)} \d u\right) e^{- \int_0^t (x_0 - \gamma_k)} = \int_0^t e^{-\int_u^t (x_0 - \gamma_k)} \d u, \]
we obtain
\[ \limsup_{t \to +\infty} \left( \phi_{k,i+1}(t) e^{- \int_0^t (x_0 - \gamma_k)} \right) \leq \varepsilon \sup_{t \geq 0} \left( \int_0^t e^{-\int_u^t (x_0 - \gamma_k)} \d u \right). \]

Thus to achieve the proof we only have to show that this last supremum is finite. Let $\kappa > 0$ and $s^* < +\infty$ be such that $x_0(t) - \gamma_k \geq \kappa$ for $t \geq s^*$ and $k = 2, \ldots, K$. Then for $t \geq s^*$
\[ \int_{s^*}^t e^{-\int_u^t (x_0 - \gamma_k)} \d u \leq \int_{s^*}^t e^{-\kappa (t-u)} \d u \leq \int_0^\infty e^{-\kappa u} \d u \]
and $\int_0^{s^*} e^{-\int_u^t (x_0 - \gamma_k)} \d u \leq \int_0^{s^*} e^{-\int_u^{s^*} (x_0 - \gamma_k)} \d u$ so that writing
\[ \int_0^t e^{-\int_u^t (x_0 - \gamma_k)} \d u = \int_0^{s^*} e^{-\int_u^t (x_0 - \gamma_k)} \d u + \int_{s^*}^t e^{-\int_u^t (x_0 - \gamma_k)} \d u \]
achieves the proof.

\bibliographystyle{plain}

\end{document}